%%%%%%%%%%%%%%%%%%%%%%%%%%%%%%%%%%%%%%%%%%%%%%%%%%%%%%%%%%%%%%%%%%%%%%%%%
% Complexity of leading digit sequences 
% Last modified: Sat 02 Nov 2019 07:55:42 PM CDT
%%%%%%%%%%%%%%%%%%%%%%%%%%%%%%%\

\documentclass[final,nomarks]{dmtcs-episciences}

%% natbib
%\usepackage[round]{natbib}

% load standard packages
\usepackage{amsmath,amsthm, amssymb, latexsym, url,amsfonts}

\usepackage{hyperref}

% for tables
\usepackage{slashbox}

%% for better float placement, allows [H] option
\usepackage{float}

%% graphics
\usepackage{graphicx}
\DeclareGraphicsExtensions{.eps, .pdf,.jpg,.png}
\DeclareGraphicsExtensions{.eps, .pdf,.jpg,.png}

\graphicspath{{Graphics/}}

%% to adjust page size (may help with float placements)

%%%%%%%%%%%%%%%%%%%%%%%%%%%%%%%%%%%%%%%%%%%%%%%%%%%%%%%%%%%%%%%%%%%%%%%%%
% theorems
%%%%%%%%%%%%%%%%%%%%%%%%%%%%%%%%%%%%%%%%%%%%%%%%%%%%%%%%%%%%%%%%%%%%%%%%%

%% numbered by section 

\newtheorem{thm}{Theorem}[section]

\newtheorem{cor}[thm]{Corollary}
\newtheorem{lem}[thm]{Lemma}
\newtheorem{conj}[thm]{Conjecture}

\theoremstyle{definition}
\newtheorem{defn}[thm]{Definition}

\theoremstyle{remark}

\newtheorem*{ex}{Example}

\numberwithin{equation}{section}

%%%%%%%%%%%%%%%%%%%%%%%%%%%%%%%%%%%%%%%%%%%%%%%%%%%%%%%%%%%%%%%%%%%%%%%%%
% macros
%%%%%%%%%%%%%%%%%%%%%%%%%%%%%%%%%%%%%%%%%%%%%%%%%%%%%%%%%%%%%%%%%%%%%%%%%

% mathbb 

\newcommand{\ZZ}{\mathbb{Z}}
\newcommand{\NN}{\mathbb{N}}
\newcommand{\RR}{\mathbb{R}}
\newcommand{\TT}{\mathbb{T}}

% floor/ceiling

\newcommand{\fl}[1]{\lfloor #1\rfloor}
\newcommand{\Fl}[1]{\left\lfloor #1\right\rfloor}

% fractional part
\newcommand{\fp}[1]{\{ #1\}}

\newcommand{\ceil}[1]{\lceil #1\rceil}
\newcommand{\Ceil}[1]{\left\lceil#1\right\rceil}

% base b logs

\newcommand{\lgb}{\log_b}

% S_{a,b}, etc

\newcommand{\Sab}{{S_{a,b}}}
\newcommand{\Cab}{C_{a,b}}
\newcommand{\cab}{c_{a,b}}
\newcommand{\dab}{d_{a,b}}
\newcommand{\pab}{p_{a,b}}

\newcommand{\cbar}{\overline{c}_b}

%% tilde/overline/etc 

\newcommand{\tildeb}{b_1}
\newcommand{\tilder}{r_1}

%%%%%%%%%%%%%%%%%%%%%%%%%%%%%%%
% spacing of prime'd symbols, 
%% d', d'', etc.
%%%%%%%%%%%%%%%%%%%%%%%%%%%%%%%

\newcommand{\bp}{b\,'}
\newcommand{\rp}{r\,'}
\newcommand{\dP}{d\,'}
\newcommand{\dpp}{d\,''}
\newcommand{\mpp}{m\,''}
\newcommand{\dzp}{d_0\kern-.8pt'}

%%%%%%%%%%%%%%%%%%%%%%%%%%%%%%%%%%%%%%%%%%%%%%%%%%%%%%%%%%%%%%%%%%%%%%%%%
% end macros
%%%%%%%%%%%%%%%%%%%%%%%%%%%%%%%%%%%%%%%%%%%%%%%%%%%%%%%%%%%%%%%%%%%%%%%%%

%%%%%%%%%%%%%%%%%%%%%%%%%%%%%%%%%%%%%%%%%%%%%%%%%%%%%%%%%%%%%%%%%%%%%%%%%
% title/author/date
%%%%%%%%%%%%%%%%%%%%%%%%%%%%%%%%%%%%%%%%%%%%%%%%%%%%%%%%%%%%%%%%%%%%%%%%%

\author{Xinwei He
\and A.J. Hildebrand
\and Yuchen Li
\and Yunyi Zhang
}

\title{Complexity of Leading Digit Sequences}

\affiliation{University of Illinois, USA}

\keywords{sequences, Benford's law, complexity}

%% \subjclass{11A63 (11B85, 11J71, 11K06, 68R15)} 

\received{2018-4-9}
\revised{2019-11-5}
\accepted{2020-4-8}

%%%%%%%%%%%%%%%%%%%%%%%%%%%%%%%%%%%%%%%%%%%%%%%%%%%%%%%%%%%%%%%%%%%%%%%%%
% begin document
%%%%%%%%%%%%%%%%%%%%%%%%%%%%%%%%%%%%%%%%%%%%%%%%%%%%%%%%%%%%%%%%%%%%%%%%%

\begin{document}

\publicationdetails{22}{2020}{1}{14}{4430}

\maketitle

\begin{abstract}
Let $\Sab$ denote the sequence of leading digits of $a^n$ in base $b$.
It is well known that if $a$ is not a rational power of $b$, 
then the sequence $\Sab$ satisfies Benford's Law; that is, 
digit $d$ occurs in $\Sab$ with  frequency
$\log_{b}(1+1/d)$, for $d=1,2,\dots,b-1$. 

In this paper, we investigate the \emph{complexity} of such sequences.
We focus mainly on the \emph{block complexity}, $\pab(n)$,
defined as the number of distinct blocks of length $n$ appearing in
$\Sab$.  In our main result we determine $\pab(n)$ for all
squarefree bases $b\ge 5$ and all rational numbers $a>0$
that are not integral powers of $b$.  In particular, we show that, for all
such pairs $(a,b)$, the complexity function $\pab(n)$ is  an \emph{affine}
function, i.e., of the form $\pab(n)=\cab n + \dab$ for all $n\ge1$, with 
coefficients $\cab\ge1$ and $\dab\ge0$, given explicitly  in terms of $a$
and $b$.  We also show that the requirement that $b$ be squarefree cannot
be dropped: If $b$ is not squarefree, then there exist integers $a$ with
$1<a<b$ for which $\pab(n)$ is not of the above form.

We use this result to obtain sharp upper and lower bounds for
$\pab(n)$ and to determine the asymptotic behavior of this function  as
$b\to\infty$ through squarefree values.  We also consider the question
which affine functions $p(n)=cn+d$ arise as the complexity function
$\pab(n)$ of some leading digit sequence $\Sab$.

We conclude with a discussion of other complexity measures for the
sequences $\Sab$ and some open problems. 
\end{abstract}

%%%%%%%%%%%%%%%%%%%%%%%%%%%%%%%%%%%%%%%%%%%%%%%%%%%%%%%%%%%%%%%%%%%%%%%%%
% body of paper
%%%%%%%%%%%%%%%%%%%%%%%%%%%%%%%%%%%%%%%%%%%%%%%%%%%%%%%%%%%%%%%%%%%%%%%%%

%%%%%%%%%%%%%%%%%%%%%%%%%%%%%%%%%%%%%%%%%%%%%%%%%%%%%%%%%%%%%%%%%%%%%%%%%
% intro/background 
%%%%%%%%%%%%%%%%%%%%%%%%%%%%%%%%%%%%%%%%%%%%%%%%%%%%%%%%%%%%%%%%%%%%%%%%%

\section{Introduction}
\label{sec:intro}

\subsection{Benford's Law}
The celebrated \emph{Benford's Law}, named after Frank Benford
\cite{benford1938}, states that leading digits in many 
data sets tend to follow the \emph{Benford distribution}, given by
\begin{equation}
\label{eq:benford} P(d) =\log_{10}\left(1+\frac1d\right),\quad
d=1,2,\dots,9. 
\end{equation}
Thus, in a data set following this distribution, approximately 
$\log_{10}2\approx 30.1\%$ of the numbers begin with digit $1$, 
approximately $\log_{10}(3/2)\approx 17.6\%$ begin with digit $2$, 
while only around $\log_{10}(10/9)\approx 4.6\%$ begin with digit $9$.

Benford's Law has been found to be a good match for a wide range of real
world data ranging from street addresses to populations of cities and
accounting data, and it has become an important tool in detecting tax and
accounting fraud.  Several books on the topic have appeared in recent years
(see, e.g., \cite{berger2015},
\cite{miller2015}, 
\cite{nigrini2012}),
and nearly one thousand articles have been published (see
\cite{benfordonline}).  

In recent decades, there has been a growing body of literature
investigating Benford's Law for mathematical sequences. Benford's Law has
been shown to hold (in the sense of asymptotic density) for 
large classes of sequences,  including exponentially growing sequences such as  
the powers of $2$ and the Fibonacci numbers, factorials, 
and the partition function; see, for example, 
Raimi \cite{raimi1976}, Diaconis \cite{diaconis1977}, 
Hill \cite{hill1995}, Anderson et al.  \cite{anderson2011},
and Mass\'e and Schneider \cite{masse2015}.

While the \emph{global} distribution 
and the \emph{global} fit to Benford's Law  
have been extensively investigated for large classes of arithmetic
sequences and are now well understood, the \emph{local} distribution
of such sequences remains to a large extent unexplored, and more mysterious.  
Recent work (see \cite{mersenne-benford} and \cite{local-benford}) revealed
that most (but not all) of the classes of
sequences that are known to satisfy Benford's Law
have poor \emph{local} distribution properties, in the sense that
$k$-tuples of leading digits of consecutive terms in the sequence do not
behave like $k$ independent Benford-distributed random variables. 
This is illustrated in Table \ref{table:leadingdigits}, which
shows the leading digits (in base $10$) 
of the first $50$ terms of the sequences $\{a^n\}$, 
$a=2,\dots,9$.  While the global distribution of digits in this
table is roughly as predicted by Benford's Law (for example, $30\%$ of the
$400$ digits in the table are $1$), the local distribution is completely
different: In some cases (e.g., for the sequence
$\{2^n\}$) the leading digits seem to
follow a near-periodic pattern, while in other cases (e.g., for the sequence 
$\{9^n\}$) they
show excessive repetition in leading digits.   In either case,
there is a strong dependence of leading digits of consecutive terms of the
sequence.

%%%%%%%%%%%%%%%%%%%%%%%%%%%%%%%%%%%%%%%%%%%%%%%%%%%%%%%%%%%%%%%%%%%%%%%%%
% table 
%%%%%%%%%%%%%%%%%%%%%%%%%%%%%%%%%%%%%%%%%%%%%%%%%%%%%%%%%%%%%%%%%%%%%%%%%

\begin{table}[H]

\begin{center}
\begin{tabular}{|c|c|}
\hline
Sequence & Leading digits of first $50$ terms (concatenated)
\\
\hline
\hline
$\{2^n\}$&
2481361251
2481361251
2481361251
2481361251
2481371251
\\ \hline

$\{3^n\}$&
3928272615
1514141313
1392827262
6151514141
3139282727
\\ \hline

$\{4^n\}$&
4162141621
4162141621
4172141721
4172141731
4173141731
\\ \hline

$\{5^n\}$&
5216317319
4216317319
4215217319
4215217319
4215217318
\\ \hline

$\{6^n\}$&
6321742116
3217421163
2174211632
1742116321
8421163218
\\ \hline

$\{7^n\}$&
7432118542
1196432117
5321196432
1175321196
4321175321
\\ \hline

$\{8^n\}$&
8654322111
8654322111
9754332111
9765432211
1865432211
\\ \hline

$\{9^n\}$&
9876554433
3222211111
1987765544
3332222111
1119877655

\\
\hline

\end{tabular}
\end{center}
\caption{Leading digits (in base $10$) of the first $50$
terms of the sequences $\{a^n\}$, $a=2,\dots,9$. 
}
\label{table:leadingdigits}
\end{table}

%%%%%%%%%%%%%%%%%%%%%%%%%%%%%%%%%%%%%%%%%%%%%%%%%%%%%%%%%%%%%%%%%%%%%%%%%
% end table 
%%%%%%%%%%%%%%%%%%%%%%%%%%%%%%%%%%%%%%%%%%%%%%%%%%%%%%%%%%%%%%%%%%%%%%%%%

Similar behavior can be found in more general sequences.  For example, in
\cite{local-benford} it is shown that sequences of the form $\{2^{p(n)}\}$,
where $p(n)$ is a polynomial, have excellent global, but poor local
distribution properties with respect to Benford's Law. On the other hand,
numerical data obtained in \cite{mersenne-benford} suggests that the leading
digits of the sequence $\{2^{p_n}\}$, where $p_n$ is the $n$-th prime,
satisfy Benford's Law on both the global and the local scale.

\subsection{Complexity of sequences}
In this paper we investigate leading digit sequences from the point of
view of \emph{complexity}.  The ``complexity'' of a sequence
$S=\{a_n\}$
over a finite set of symbols (for example, the digits $1,2,\dots,9$) can be
measured in a variety of ways; see the surveys of Allouche
\cite{allouche2012}, Ferenczi \cite{ferenczi1999}, and Kamae
\cite{kamae2012} for an overview of different complexity measures.  Here we
will use as our primary complexity measure the \emph{block complexity}%
\footnote{Equivalent terms for ``block complexity'' are 
\emph{subword complexity} and \emph{factor complexity}, 
with an infinite sequence being  considered an infinite \emph{word} over a
given \emph{alphabet}.  The terminology we are using here---block
complexity---is the one found in the \emph{mathematical} 
literature on the subject, e.g., the
surveys by Allouche \cite{allouche2012} and Ferenczi \cite{ferenczi1999}.}
defined as the function $p(n)=p_S(n)$ that counts the number of distinct
``blocks'' of length $n$ (i.e., $n$-tuples of consecutive terms) occurring
in a sequence $S$. 

The block complexity function $p_S(n)$ is the most commonly used complexity
measure for arithmetical sequences $S$ and has been extensively studied.
It is easy to see that the function $p_S(n)$ is bounded if and only if 
the sequence $S$ is eventually periodic.  On the other hand, for random
sequences, the block complexity function $p_S(n)$ grows at an exponential rate;
more precisely, it satisfies  $p_S(n)=k^n$, where $k$ is the number of distinct
symbols in the sequence.  In between these two extremes there is a rich
spectrum of sequences with intermediate levels of complexity and corresponding
rates of growth of $p_S(n)$.  We refer to the papers cited above---in
particular, Ferenczi \cite{ferenczi1999}---for further details, examples, and
references.

\subsection{The leading digit sequences $\Sab$}
Our main focus in this paper will be on leading digit sequences 
for geometrically growing sequences such as those  shown in Table
\ref{table:leadingdigits}. 
More precisely, given an integer $b\ge 3$ and a real number $a>0$, 
we consider the sequence $\Sab$
of leading digits of $a^n$ in base $b$; that is,  $\Sab$ is defined as 
\begin{equation}
\label{eq:def-Sab}
\Sab=\{D_b(a^n)\}_{n=1}^\infty,
\end{equation}
where $D_b(x)$ denotes the leading digit of $x$ in base $b$, defined by
\begin{equation}
\label{eq:Db(x)}
D_b(x)=d\Longleftrightarrow d\cdot b^k\le x< (d+1)b^k\quad
\text{for some $k\in\ZZ$} \quad (d=1,2,\dots,b-1).
\end{equation}
We denote by $\pab(n)$ the associated (block) complexity function,
i.e., the number of distinct blocks of length $n$ occurring in the
sequence $\Sab$.  More formally, $\pab(n)$ is given by 
\begin{equation}
\label{eq:def-pab}
\pab(n)=p_{\Sab}(n)
=\#\{(D_b(a^{m}),\dots,D_b(a^{m+n-1})): m=1,2,\dots\}.
\end{equation}

The data in Table \ref{table:leadingdigits} suggests that the sequences
$\Sab$, while not being periodic,  
have low complexity.  More extended computations confirm this: 
Figure \ref{fig:complexityBase10} shows the behavior of the ``empirical'' 
complexity 
functions $p_{a,10}(n)$ for selected values of $a$ and $n\le 100$, based on
the first $100,000$ terms of the sequence.\footnote{We use the term
``empirical'' here to emphasize the fact that the data were obtained 
by counting the number of distinct blocks of
length $n$ observed in a finite (though very large) initial segment of the
sequence and thus are not necessarily equal to the actual complexity
function.  However, the theoretical results we will prove here confirm the
data presented in Figure \ref{fig:complexityBase10}.}

%%%%%%%%%%%%%%%%%%%%%%%%%%%%%%%
% graphs of complexity functions 
%%%%%%%%%%%%%%%%%%%%%%%%%%%%%%%

\begin{figure}[H]
\begin{center}
\includegraphics[width=0.48\textwidth]{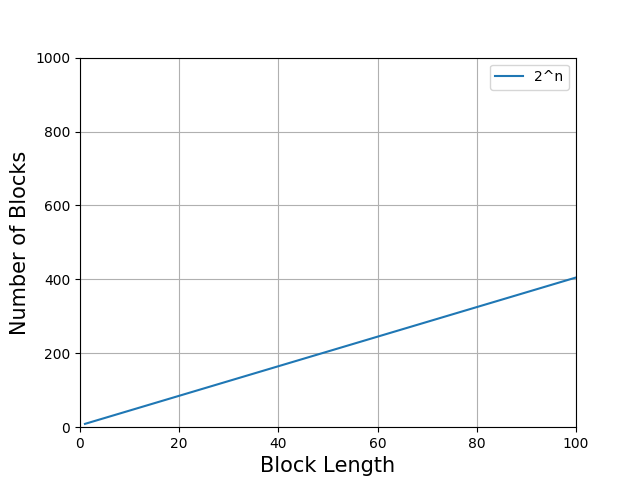}
\hspace{1em}
\includegraphics[width=0.48\textwidth]{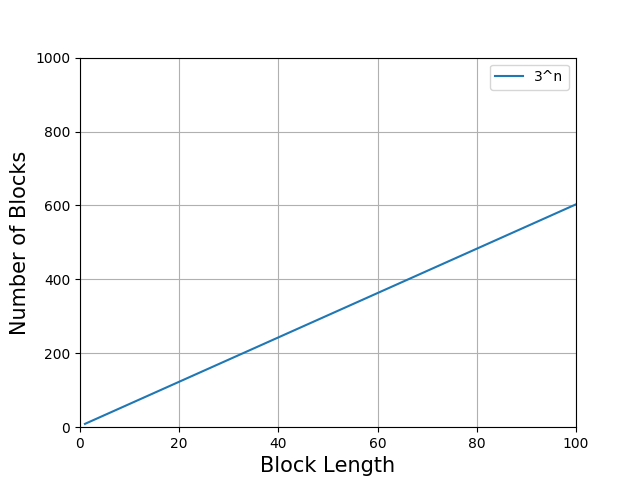}
\\
\includegraphics[width=0.48\textwidth]{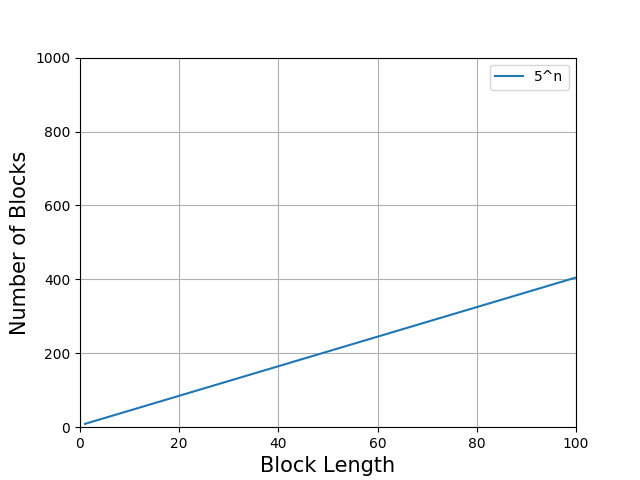}
\hspace{1em}
\includegraphics[width=0.48\textwidth]{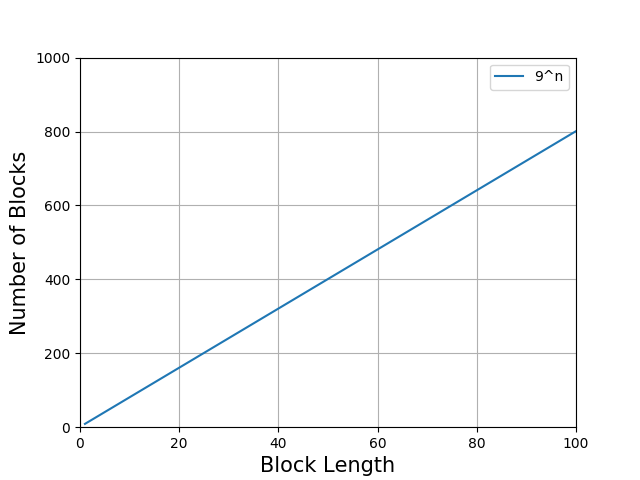}
\end{center}
\caption{Empirical complexity functions for the leading digit sequences of 
$\{2^n\},\{3^n\},\{5^n\},\{9^n\}$  in base $10$, 
based on the first $100,000$ terms of these sequences.
}
\label{fig:complexityBase10}
\end{figure}

The figure suggests that the functions $p_{a,10}(n)$ 
grow at a linear rate, with slopes depending on the value of $a$, though
the precise nature of this dependence is unclear.
Motivated by questions such as these, 
we seek to develop a complete understanding of
the complexity of the sequences $\Sab$.

\subsection{Coding sequences of rotations} 
Given real numbers $\alpha>0$ and $x$ and a partition of the unit interval
$0=\beta_0<\beta_1<\cdots<\beta_p=1$, the associated ``coding sequence''
$S=\{s_n\}$ is a sequence on $\{1,2,\dots,p\}$ defined by letting $s_n=k$ if and
only if $\{n\alpha + x \}\in [\beta_{k-1},\beta_{k})$ 
(where $\{t\}=t-\fl{t}$ denotes denotes the fractional part of $t$).
Such sequences have
been extensively studied in the literature; see, e.g., Alessandri and Berth\'e
\cite{alessandri-berthe}, and Berstel and Vuillon \cite{berstel2002}.  In
particular, it is known (see, e.g., \cite[Theorem 10]{alessandri-berthe})  that
the complexity function of a coding sequence with \emph{irrational} rotation
$\alpha$ is \emph{ultimately} affine, i.e., is of the form $p(n)=cn+d$ for
\emph{sufficiently large} $n$ (though in general not for \emph{all} $n\ge1$).

The leading digit sequences $\Sab$ defined above can be viewed as a special
type of coding sequence.  To see this, note that 
(cf. Lemma \ref{lem:leading-digit-criteria} below) $a^n$ has leading digit $d$ in
base $b$ if and only if $\{n\log_b a\}\in [\log_b d,\log_b(d+1))$, for
$d=1,2,\dots,b-1$.  Thus,
$\Sab$ is the coding sequence associated with the numbers $\alpha=\log_ba$ and
$x=0$, and the partition $0=\log_b1<\log_b 2<\dots < \log_b {(b-1)}< \log_b
b=1$.  It follows from the general result mentioned above that, if 
$\log_ba$ is irrational, then the complexity function $\pab(n)$ of $\Sab$ is
\emph{ultimately} affine, i.e., of the form $cn+d$ for all sufficiently large $n$.

In this paper we will show that, when $a$ is \emph{rational}, then, under some
mild additional assumptions, the complexity
function $\pab(n)$ is affine in the full sense, i.e., of the form $cn+d$ for
\emph{all} $n\ge 1$.

\subsection{Summary of results and outline of paper}
In Section \ref{sec:main} we state our main result, Theorem \ref{thm:main},
which completely determines the complexity function $\pab(n)$ of 
the leading digit sequence $\Sab$,
for any squarefree base $b\ge5$ and any positive 
rational number $a$ that is not an integral power of $b$.
We show that, under these assumptions, $\pab(n)$ is an affine function, i.e.,
satisfies \begin{equation}
\label{eq:pab-form}
\pab(n)=\cab n+\dab, \quad n=1,2,3,\dots,
\end{equation}
we give explicit formulas for the coefficients $\cab$ and  $\dab$ in
\eqref{eq:pab-form}, and we derive several corollaries from this result.

To complement Theorem \ref{thm:main},
we show in Theorem \ref{thm:main2} and Corollary \ref{cor:main2} that
the requirement that $b$ be squarefree cannot be dropped: 
For any non-squarefree integer  $b\ge5$ there exists an integer $a$ with $1<a<b$ 
such that the complexity function $\pab(n)$ is not of the form
\eqref{eq:pab-form} with $\cab\ge1$. 

In Section \ref{sec:proof}
we prove  Theorems \ref{thm:main} and
\ref{thm:main2}.  Our approach uses results and techniques from the theory
of dynamical systems generated by irrational ``shifts'' on the torus
$\TT$, along with some number-theoretic arguments.  

In Section \ref{sec:extreme} 
we consider extreme values of the complexity function
$\pab(n)$.  We show that, under the above assumptions 
on $a$ and $b$, the complexity function $\pab(n)$ satisfies
\[
\Fl{\frac{b-1}{2}}n 
+\Ceil{\frac{b-1}{2}} 
\le \pab(n)\le (b-1)n,\quad n=1,2,\dots,
\]
and that the upper and lower bounds are both sharp.

In Section \ref{sec:asymp} we determine the asymptotic behavior of
the ``slope'' $\cab$ in \eqref{eq:pab-form}
as $b\to\infty$ while $a$ is fixed. In particular,
we show that if $a$ is an integer $\ge 2$,
then the slope $\cab$ satisfies 
\[
\cab\sim \left(1-\frac1a\right)b
\]
as $b\to \infty$ through squarefree values.

In Section \ref{sec:values} we consider the question which complexity
functions $p(n)$ can be realized as the complexity function $\pab(n)$ of
a leading digit sequence $\Sab$ of the above type.
By \eqref{eq:pab-form} such a 
complexity function is necessarily affine. However,
not all affine functions $cn+d$ arise in this manner, and the question
of which pairs $(c,d)$ of coefficients correspond to leading digit
complexity functions leads to some interesting number-theoretic problems.

In Section \ref{sec:cyclomatic} we consider another complexity measure,
the  ``cyclomatic'' complexity, which has been originally developed as a
measure for the complexity of a graph and was adapted  to the context of
leading digit sequences by Iyengar et al. \cite{iyengar1983} and Kak
\cite{kak1983}.  We will determine the cyclomatic complexity for sequences of
the form $\Sab$.

In the final section, Section \ref{sec:concluding}, we discuss some related
work and present some open problems.

%%%%%%%%%%%%%%%%%%%%%%%%%%%%%%%%%%%%%%%%%%%%%%%%%%%%%%%%%%%%%%%%%%%%%%%%%
% Section 2:  Main Result  
%%%%%%%%%%%%%%%%%%%%%%%%%%%%%%%%%%%%%%%%%%%%%%%%%%%%%%%%%%%%%%%%%%%%%%%%%

\section{The complexity of $\Sab$: Main results}
\label{sec:main}

\subsection{Notations  and conventions}
We let $(n,m)$ denote the greatest common divisor of two integers $n$
and $m$.  We denote by $\fl{x}$ and $\ceil{x}$ the floor and ceiling
functions, defined as the largest integer $\le x$, resp. the smallest
integer $\ge x$. We let $\fp{x}=x-\fl{x}$ denote the fractional part 
of $x$.

Throughout this paper we assume that $b$ is an integer $\ge 3$ and
$a$ is a positive real number.  For our main results we will 
restrict $b$ and $a$ further as follows:

\begin{defn}[Admissible pairs]
\label{def:admissible}
A pair\footnote{The tuple notation, $(a,b)$, used in this definition
is also the notation for the greatest common
divisor. However, this will not cause any confusion as the 
meaning will always be clear from the context.}
$(a,b)$ is called \emph{admissible} if 
\begin{itemize}
\item[(i)] $b$ is a \emph{squarefree} integer $\ge 5$; and
\item[(ii)] $a$ is a positive \emph{rational} number that 
is not an integral power of $b$.
\end{itemize}
\end{defn}

Given an admissible pair $(a,b)$, we can represent the (rational) number 
$a$ uniquely in the form 
\begin{equation}
\label{eq:a=bkrs}
a=\frac{r}{s}\,b^k,
\quad k\in\ZZ,\quad r,s\in\NN, \quad (r,s)=1,
\quad 1<\frac{r}{s}<b.
\end{equation}
In particular, if $1<a<b$, then the integer $k$ in \eqref{eq:a=bkrs} is
$0$, so the representation \eqref{eq:a=bkrs} reduces to $a=r/s$, the standard
representation of $a$ as a reduced rational number.

\subsection{Main result}
We are now ready to state our main result, which completely describes the
complexity function of $\Sab$, for any admissible pair $(a,b)$.
\begin{thm}[Complexity of $\Sab$: Main Result]
\label{thm:main}
Let $(a,b)$ be an admissible pair and let $r$ and $s$ be defined by 
\eqref{eq:a=bkrs}.
Then the complexity function $\pab(n)$ of $\Sab$  satisfies
\begin{equation}
\label{eq:pab-rational}
\pab(n)=\cab n + \dab,\quad n=1,2,\dots,
\end{equation}
where
\begin{align}
\label{eq:cab}
\cab&=b-1-\Fl{\frac{b-1}{r}} -\Fl{\frac{(b,r)-1}{s}},
\\
\label{eq:dab}
\dab&=b-1-\cab= \Fl{\frac{b-1}{r}} +\Fl{\frac{(b,r)-1}{s}}.
\end{align}
\end{thm}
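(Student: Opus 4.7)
My plan is to recast $\Sab$ as a symbolic coding of an irrational rotation on $\TT=\RR/\ZZ$, express the complexity as a count of boundary-point coincidences on $\TT$, and evaluate that count by elementary number theory that uses the squarefreeness of $b$ in an essential way.

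First I set $\alpha=\lgb a$; admissibility forces $\alpha$ irrational (for rational $a$ and integer $b\ge 2$, being a rational power of $b$ is the same as being an integer power of $b$). Writing $\beta_d=\lgb d$, the condition $D_b(a^n)=d$ becomes $\fp{n\alpha}\in[\beta_d,\beta_{d+1})$, so $\Sab$ is the itinerary of the rotation $R_\alpha:x\mapsto x+\alpha\pmod 1$ with respect to the partition $\{[\beta_d,\beta_{d+1})\}_{d=1}^{b-1}$ of $\TT$. Blocks of length $n$ correspond to cells of the refined partition, whose boundary set is
\[
B_n=\{\beta_d-j\alpha\bmod 1:1\le d\le b-1,\ 0\le j\le n-1\},
\]
and the denseness of the rotation orbit gives $\pab(n)=|B_n|$.

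Next, since $\alpha\equiv\lgb(r/s)\pmod 1$, a coincidence $\beta_d-j\alpha\equiv\beta_{d'}-j'\alpha\pmod 1$ with $j>j'$ is, on setting $\ell=j-j'$, equivalent to $\ell\in L(d)$, where
\[
L(d)=\{\ell\ge 1:ds^\ell=d'r^\ell b^m\ \text{for some}\ d'\in\{1,\dots,b-1\},\ m\in\ZZ\}.
\]
No two points $\beta_d-j\alpha$ coincide within a fixed level $j$, so the number of new boundary points added at level $j\ge 1$ is $(b-1)-|\{d:L(d)\cap\{1,\dots,j\}\ne\emptyset\}|$, and summing over levels gives
\[
\pab(n)=(b-1)n-\sum_{j=1}^{n-1}\bigl|\{d:L(d)\cap\{1,\dots,j\}\ne\emptyset\}\bigr|.
\]
The crux is the claim that for admissible $(a,b)$, $L(d)\ne\emptyset$ if and only if $1\in L(d)$. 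Granting this, the summand above equals the constant $N:=|\{d:1\in L(d)\}|$ for every $j\ge 1$, so $\pab(n)=(b-1-N)n+N$, the required linear form with $\dab=N$ and $\cab=b-1-N$. To evaluate $N$, I analyze $ds=d'rb^m$: a size estimate forces $m\le 0$; the case $m=0$ gives $r\mid d$ (via $(r,s)=1$) and contributes $\fl{(b-1)/r}$ values of $d$; the case $m\le -1$ collapses to $|m|=1$ (larger $|m|$ forces $d<1$), and the squarefreeness of $b$ reduces $r\mid db$ to $(r/(b,r))\mid d$, which together with $d\le\fl{r(b-1)/(sb)}$ yields exactly $\fl{((b,r)-1)/s}$ values after a brief floor-function identity. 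Since $d\ge r$ in the $m=0$ case and $d<r$ in the $m=-1$ case, the two contributing sets are disjoint, so $N=\dab$.

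The main obstacle is the key claim in the third step. Without the squarefreeness assumption the claim fails: for instance $b=12$, $a=2$, $d=3$ satisfies $2\in L(3)$ (via $3\cdot 12=9\cdot 2^2$) yet $1\notin L(3)$. To establish it under squarefreeness, I would perform a prime-by-prime $v_p$-analysis over $p\mid b$, exploiting $v_p(b)=1$ to show that any solution to $ds^\ell=d'r^\ell b^m$ with $\ell\ge 2$ can be reduced, prime by prime and with the $b$-coprime parts handled separately using $(r,s)=1$, to a solution with $\ell=1$.
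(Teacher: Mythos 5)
Your route is essentially the paper's: recast $\Sab$ as the itinerary of the rotation $x\mapsto x+\alpha$ with $\alpha=\lgb(r/s)$, identify $\pab(n)$ with the number of distinct boundary points $\lgb d-j\alpha$ in $\TT$, reduce linearity to the statement that any coincidence at lag $\ell\ge2$ forces one at lag $1$, and count the lag-$1$ coincidences explicitly. Your lag-$1$ count is correct and matches the paper's (your $m=0$ and $m=-1$ cases giving $\fl{(b-1)/r}$ and $\fl{((b,r)-1)/s}$, with a clean disjointness argument via $d\ge r$ versus $d<r$), and your $b=12$, $a=2$, $d=3$ example correctly locates where squarefreeness is indispensable.

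The gap is that the crux --- the claim that $L(d)\ne\emptyset$ implies $1\in L(d)$ --- is only sketched, and the sketch as stated would not close. Establishing $1\in L(d)$ requires producing $d''$ and $m''$ with $ds=d''rb^{m''}$ \emph{and} $1\le d''\le b-1$; your proposed $v_p$-analysis addresses only the divisibility half (that $d''=dsb^{-m''}/r$ is an integer for $m''=0$ or $m''=-1$), not the size constraint. The size constraint is genuinely binding: in the $m''=-1$ branch one needs $d''=dsb/r\le b-1$, which is \emph{not} automatic since $sb/r>1$, and verifying it forces you back to the original relation $ds^\ell=d'r^\ell b^m$ together with the bounds $1<r/s<b$ and a case split on how $|m|$ compares with $\ell$ (e.g.\ for $|m|\le\ell-1$ the valuation inequalities $\ell v_p(r)\le v_p(d)+|m|v_p(b)$ with $v_p(b)\le1$ already give $r\mid d$, so one lands in the harmless $m''=0$ branch; only when $|m|=\ell$ does the $m''=-1$ branch occur, and there the bound $d<r(r/(sb))^{\ell-1}$ supplies the needed smallness of $d$). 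This interplay between valuations and the digit-range constraint is exactly where the paper's corresponding lemma does its work, and it needs to be written out for your argument to be complete. A secondary, harmless inaccuracy: the reduction of $r\mid db$ to $(r/(b,r))\mid d$ in your lag-$1$ count is a general fact about valuations and does not use squarefreeness; squarefreeness enters only in the lag-reduction step.
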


In particular, this result shows that $\pab(n)$ is an \emph{affine}
function for $n\ge1$, with $\cab$ and $\dab$ representing,
respectively, the slope and intercept of this function.
Note that, by \eqref{eq:dab},  $\cab$ and $\dab$ are related by the constraint
$\cab+\dab=b-1$.  Thus, the complexity function
$\pab(n)$ is completely determined by either of the quantities $\cab$ and
$\dab$ and the base $b$.

Table \ref{table:main-theorem-examples} gives a numerical illustration
of the formulas of Theorem \ref{thm:main}, showing
the complexity functions for the leading digit sequences $\Sab$ 
for $a=2,3,\dots,9$ and selected squarefree bases.
In particular, the results for $b=10$
confirm the empirical observations made in Figure
\ref{fig:complexityBase10}.

%%%%%%%%%%%%%%%%%%%%%%%%%%%%%%%
% table illustrating Theorem 1 
%%%%%%%%%%%%%%%%%%%%%%%%%%%%%%%

\begin{table}[H]
\begin{center}
\begin{tabular}{|c||c|c|c|c|c|c|c|c|}
\hline
Sequence $\{a^n\}$ &
$\{2^n\}$ &
$\{3^n\}$ &
$\{4^n\}$ &
$\{5^n\}$ &
$\{6^n\}$ &
$\{7^n\}$ &
$\{8^n\}$ &
$\{9^n\}$ 
\\
\hline\hline
$b=5$ &
$ 2n+2 $ & $ 3n+1$ & $ 3n+1$ & &$4n$  & $ 4n$ & $ 4n$ & $ 4n$

\\ \hline
$b=6$ &
$ 2n+3 $ & $ 2n+3 $ & $ 3n+2 $ & $ 4n +1 $ &   & $5n$ & $ 4n+1$ & $3n+2$
\\ \hline
$b=7$ &
$ 3n+3 $ & $ 4n+2 $ & $ 5n+1$ & $ 5n+1$ & $ 5n+1$ &  & $ 6n$  & $ 6n$
\\ \hline
$b=10$ &
$ 4n+5 $ & $ 6n+3 $ & $ 6n+3 $ & $ 4n+5 $ & $$
$ 7n+2 $ & $ 8n+1$ & $ 7n+2 $ & $ 8n+1$
\\
\hline
\end{tabular}
\vfill
\end{center}
\caption{The complexity functions $\pab(n)$ for selected values of $a$ and
$b$, computed using the formulas of Theorem \ref{thm:main}. Blank
entries correspond to pairs $(a,b)$ that are not admissible, i.e., cases 
where $a$ is an integral power of $b$.}
\label{table:main-theorem-examples}
\end{table}

It is natural to ask to what extent the 
restrictions imposed by the admissibility requirement can be relaxed.
The following remarks address this question:

\bigskip
\begin{itemize}
\item[(1)] The requirement that $a$ is not an integral power of $b$ 
serves to exclude trivial situations such as the sequence $\{10^n\}$ in
base $10$.  Indeed, it is not hard to see that whenever $a$ is a
\emph{rational} power of $b$, the sequence $\Sab$ 
is periodic, and hence has a bounded complexity
function.
We remark that, under the additional assumptions (which are part of the
admissibility condition) that $b$ is squarefree and $a$ is rational, the
two conditions ``$a$ is not an \emph{integral} power of $b$''  and ``$a$ is
not a \emph{rational} power of $b$'' are equivalent (cf. the proof of
Corollary \ref{cor:dirichlet} below).

\item[(2)] We have stated our result only for \emph{rational} values of $a$ as
this is the most interesting, and most challenging, case.  The  
result could be extended to irrational values of $a$, but complications
arise in certain special cases, such as the sequence $\{(\sqrt{2})^n\}$. 
One can show that for all but countably many irrational numbers $a$ one has
\begin{equation*}
\label{eq:pab-irrational}
\pab(n)=(b-1)n,\quad n=1,2,\dots.
\end{equation*}
In particular, $\pab(n)$ is of this form whenever
$a$ is a \emph{transcendental} number and $b$ an arbitrary integer $\ge
4$, not necessarily squarefree.
  
\item[(3)] The purpose of the restriction $b\ge 5$  is to avoid technical
complications that arise in the case $b=3$ and which would require a
separate treatment of this case. (These complications are due to the fact
that, when $b=3$, the interval $[\log_b1,\log_b2)$ has length $>1/2$,
whereas for $b\ge 4$ all intervals $[\lgb d, \lgb(d+1))$, $d=1,\dots,b-1$, 
have length $\le1/2$; cf. the footnote at the end of the proof of Lemma 
\ref{lem:p(k)-Lk}.)

\item[(4)] The most significant restriction in Theorem \ref{thm:main}
is the requirement that the base $b$ be squarefree. 
The results below show that this restriction is, in a sense, best-possible.
The restriction could, however,
be replaced by other restrictions involving both $a$ and $b$. For example,
we have $\pab(n)=(b-1)n$ for $n\ge1$ whenever $a$ and $b$ are positive
integers satisfying $a>b\ge4$ and $(a,b)=1$.
\end{itemize}

\begin{thm}[Complexity of $\Sab$: A counter-example]
\label{thm:main2}
Let $b\ge 5$ and $a\ge2$ be integers such that 
$a^2$ is a divisor of $b$, and $b$ is not a rational power of $a$. Then 
\begin{equation}
\label{eq:main2}
\pab(3)-\pab(2)\not=\pab(2)-\pab(1).
\end{equation}
In particular, under the above assumptions on $a$ and $b$, 
the complexity function $\pab(n)$ is not affine for $n\ge1$.
\end{thm}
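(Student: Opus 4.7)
The plan is to use the symbolic-dynamics viewpoint: set $\alpha = \log_b a$, which is irrational because $b$ is not a rational power of $a$, and code the orbit $\{n\alpha\}_{n\ge 1}$ of the rotation by $\alpha$ via the partition of $[0,1)$ into the $b-1$ intervals $I_d = [\log_b d, \log_b(d+1))$. A standard fact for rotation codings---the same input used in the proof of Theorem~\ref{thm:main}---is that $\pab(n) = |E_n|$, where
\[
E_n = \{\beta_d - i\alpha \bmod 1 : 1 \le d \le b-1,\ 0 \le i \le n-1\}, \qquad \beta_d = \log_b d,
\]
since the atoms of the refined partition correspond bijectively to the $n$-blocks appearing in $\Sab$. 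Writing $S = E_1 = \{\beta_1,\dots,\beta_{b-1}\}$ and using $E_{n+1} = E_n \cup (S - n\alpha)$, this gives
\[
\pab(n+1) - \pab(n) = (b-1) - |E_n \cap (S - n\alpha)|,
\]
so that \eqref{eq:main2} reduces to computing $|S \cap (S-\alpha)|$ and $|E_2 \cap (S-2\alpha)|$ explicitly.

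The key arithmetic observation is that $\beta_j + k\alpha \equiv \beta_l \pmod 1$ iff $j a^k = l b^t$ for some $l \in \{1,\dots,b-1\}$ and $t \in \ZZ$; since $1 \le j, l \le b-1$ and $a \le \sqrt{b}$ (which follows from $a^2 \mid b$), only $t=0$ and $t=1$ contribute. Writing $c = b/a$ and $m = b/a^2$, both integers by hypothesis, a short case split yields
\[
|S \cap (S-\alpha)| = (c-1) + (a-1), \qquad |S \cap (S-2\alpha)| = (m-1) + (a^2-1),
\]
where in each identity the first summand comes from the ``no-wraparound'' case $t=0$ (i.e. $j \le (b-1)/a^k$) and the second from $t=1$ (i.e. $j$ a nonzero multiple of $b/a^k$), which exists precisely because of the divisibility $a^k \mid b$.

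For the triple intersection, I would describe $S \cap (S-\alpha) \cap (S-2\alpha)$ as $A \cap B$, where $A = \{1,\dots,c-1\} \cup \{c,2c,\dots,(a-1)c\}$ and $B = \{1,\dots,m-1\} \cup \{m,2m,\dots,(a^2-1)m\}$; using $c = am$, a four-way case check yields $|A \cap B| = (m-1) + (a-1) + (a-1) = m + 2a - 3$. Inclusion--exclusion applied to $E_2 \cap (S-2\alpha) = (S \cap (S-2\alpha)) \cup ((S-\alpha) \cap (S-2\alpha))$ then gives $|E_2 \cap (S-2\alpha)| = (a^2 + m - 2) + (a + c - 2) - (m + 2a - 3) = a^2 - a + c - 1$, so that $\pab(2) - \pab(1) = b - a - c + 1$ and $\pab(3) - \pab(2) = b - a^2 + a - c$. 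Subtracting,
\[
\bigl(\pab(2) - \pab(1)\bigr) - \bigl(\pab(3) - \pab(2)\bigr) = (a-1)^2 \ge 1
\]
(since $a \ge 2$), which is \eqref{eq:main2}. The main obstacle is the bookkeeping in the triple-intersection computation; everything else follows directly once the two ingredients $\pab(n) = |E_n|$ and the arithmetic characterization $ja^k = lb^t$ are in hand.
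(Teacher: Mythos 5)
Your proposal is correct, and it rests on the same foundation as the paper's argument---the identity $\pab(n)=|L_n|$ (Lemma \ref{lem:p(k)-Lk}, which the paper notes is independent of squarefreeness) together with the arithmetic characterization $\lgb j + k\alpha \equiv \lgb l \pmod 1 \Leftrightarrow ja^k=lb^t$---but it then takes a genuinely different route. The paper only needs the qualitative fact that $L\cap(L-2\alpha)\not\subset L\cap(L-\alpha)$, and establishes it by exhibiting a single explicit witness, $x=\lgb\bigl((a^2-1)\,b/a^2\bigr)$: one checks that $x+2\alpha\equiv\lgb(a^2-1)$ lands in $L$ while $x+\alpha\equiv\lgb\bigl((a^2-1)/a\bigr)$ does not, since $(a^2-1)/a$ is not an integer. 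You instead compute all the relevant intersection cardinalities exactly ($|S\cap(S-\alpha)|=a+c-2$, $|S\cap(S-2\alpha)|=a^2+m-2$, and the triple intersection $m+2a-3$), which costs the four-way bookkeeping you mention but buys a quantitative refinement: the second difference $\bigl(\pab(2)-\pab(1)\bigr)-\bigl(\pab(3)-\pab(2)\bigr)$ equals exactly $(a-1)^2$, rather than merely being nonzero. (Indeed, your count $|B|-|A\cap B|=(a-1)^2$ identifies the full set of elements of $L\cap(L-2\alpha)$ missing from $L\cap(L-\alpha)$, of which the paper's witness is one.) All the steps check out---the bound $t\in\{0,1\}$ follows from $a\le\sqrt b$ as you say, the $t=0$ and $t=1$ index sets are disjoint in each case, and the exclusion of $m=1$ (i.e.\ $b=a^2$) is guaranteed by the hypothesis that $b$ is not a rational power of $a$, which you use implicitly and might state explicitly.
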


\begin{cor}[Failure of Theorem \ref{thm:main} for non-squarefree bases]
\label{cor:main2}
Given any non-squarefree integer $b\ge5$, there exists an integer $a$ with
$1<a<b$ such that $\pab(n)$ is not of the form
\begin{equation}
\label{eq:main2cor}
\pab(n)=cn+d,\quad n=1,2,\dots
\end{equation}
for some integers $c\ge 1$ and $d$.
\end{cor}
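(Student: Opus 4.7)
The plan is to construct the required $a$ directly from the prime factorization of $b$. Since $b\ge5$ is non-squarefree, some prime $p$ satisfies $p^2\mid b$, and I will take $a=p$. This choice automatically gives $1<a\le\sqrt{b}<b$ and $a\in\NN$. The argument then splits into two cases, according to whether $b$ is a power of $p$ or not, because the hypotheses of Theorem~\ref{thm:main2} require $b$ not to be a rational power of $a$, and this condition fails precisely when $b$ is a prime power.

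In the first case, $b=p^k$ for some $k\ge2$, so $a=b^{1/k}$ is a rational power of $b$. Then $n\log_b a=n/k$ has a fractional part depending only on $n$ modulo $k$, which forces the leading-digit sequence $\Sab$ to be purely periodic. Consequently $\pab(n)$ is bounded, and no representation $\pab(n)=cn+d$ with $c\ge1$ is possible, since such a function is unbounded.

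In the second case, $b$ has a prime factor distinct from $p$. Here $a^2=p^2$ divides $b$ by the choice of $p$, and $b$ cannot be a rational power of $a=p$: any identity $b^n=p^m$ with positive integer exponents would force every prime factor of $b$ to equal $p$, contradicting the case assumption. Hence the hypotheses of Theorem~\ref{thm:main2} are satisfied, and the theorem delivers $\pab(3)-\pab(2)\ne\pab(2)-\pab(1)$. This inequality of consecutive first differences is incompatible with any formula of the form $\pab(n)=cn+d$, which would force all consecutive differences to equal the common slope $c$.

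The only real obstacle I anticipate is noticing that Theorem~\ref{thm:main2} does not cover the prime-power case $b=p^k$; the remedy is the separate, easy case 1 above, relying on the standard fact that $\Sab$ is periodic whenever $a$ is a rational power of $b$. Aside from that small bifurcation, the corollary is essentially a direct invocation of Theorem~\ref{thm:main2} with $a$ chosen as a prime whose square divides $b$.
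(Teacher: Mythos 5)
Your proposal matches the paper's own proof: both take $a=q$ for a prime $q$ with $q^2\mid b$, split into the case where $b$ is a power of $q$ (where $\Sab$ is periodic, hence $\pab(n)$ is bounded and cannot equal $cn+d$ with $c\ge1$) and the case where it is not (where Theorem~\ref{thm:main2} applies directly). The argument is correct and essentially identical to the paper's.
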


\begin{proof}
Given a non-squarefree integer $b\ge5$, let $q$ be a prime such
that $q^2$ divides $b$, and take $a=q$. 
If $b$ is not a power of $q$, then Theorem \ref{thm:main2} yields the
desired conclusion.
If $b$ is a power of $q$, say $b=q^k$ with $k\ge2$, then
the sequence $\Sab$ of leading digits of $a^n(=q^n)$ in base
$b=q^k$ is the periodic sequence
$q,q^2,\dots,q^{k-1},1,q,q^2,\dots,q^{k-1},1,q,q^2,\dots$, and hence
has bounded complexity function $\pab(n)$. In particular,
\eqref{eq:main2cor} cannot hold with a \emph{positive} coefficient $c$.
\end{proof}

We remark that, while for non-squarefree bases $b\ge 5$ and values of $a$
that are not rational powers of $b$, the complexity
function $\pab(n)$ in general is not affine \emph{for all $n\ge 1$}, 
the general results about 
codings of irrational rotations mentioned above 
(e.g., \cite[Theorem 10] {alessandri-berthe}) 
imply that $\pab(n)$ is \emph{ultimately affine}, 
i.e., is of the form $\pab(n)=cn+d$ for $n\ge n_0$, for suitable
integers $n_0$, $c$ and $d$.  However, determining explicit values of 
the coefficients $c$ and $d$ and thus obtaining a result analogous to Theorem
\ref{thm:main} for non-squarefree values of $b$, seems to be a highly
nontrivial task.  (In particular, the formulas \eqref{eq:cab} and
\eqref{eq:dab} are, in general, not valid when $b$ is not squarefree.)

\subsection{Corollaries and special cases}

\begin{cor}[Special Case: Integer Values $a$]
\label{cor:integer-a}
Let $b\ge 5$ be squarefree and let  
$a$ be a positive integer that is not an integral power of $b$. Then 
we have:

\begin{itemize}
\item[(i)] If $1<a<b$, then
\begin{equation}
\label{eq:pab-a-integer1}
\pab(n)=
\left(b-\Fl{\frac{b-1}{a}}-(a,b)\right)
n + \Fl{\frac{b-1}{a}}+(a,b)-1,
\quad n=1,2,\dots
\end{equation}

\item[(ii)] If $a>b$ and $(a,b)=1$, then 
\begin{equation}
\label{eq:pab-integer2}
\pab(n)=(b-1)n, \quad n=1,2,\dots.
\end{equation}
\end{itemize}
\end{cor}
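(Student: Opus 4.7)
The plan is to derive both parts by substituting the canonical representation \eqref{eq:a=bkrs} of $a$ into the formulas \eqref{eq:cab} and \eqref{eq:dab} of Theorem \ref{thm:main}; in each case the only real work is to identify $k$, $r$, $s$ and to verify admissibility.

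For part (i), when $a$ is an integer with $1<a<b$, the pair $(a,b)$ is admissible since $a$, lying strictly between $b^0=1$ and $b^1=b$, cannot be an integral power of $b$. The representation \eqref{eq:a=bkrs} then reduces to $k=0$, $r=a$, $s=1$, so $\fl{((b,r)-1)/s}=(a,b)-1$. Substituting these values into \eqref{eq:cab} and \eqref{eq:dab} gives $\cab=b-\fl{(b-1)/a}-(a,b)$ and $\dab=\fl{(b-1)/a}+(a,b)-1$, which is exactly \eqref{eq:pab-a-integer1}.

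For part (ii), with $a>b$ an integer and $(a,b)=1$, admissibility follows since $(a,b)=1$ together with $b\ge 2$ rules out $a$ being a positive integral power of $b$. To find the representation \eqref{eq:a=bkrs}, I take $k$ to be the unique integer with $b^k<a<b^{k+1}$; because $a>b$, we have $k\ge 1$. Moreover, $(a,b)=1$ implies $(a,b^k)=1$, so $a/b^k$ is already in lowest terms, giving $r=a$ and $s=b^k$. Then $\fl{(b-1)/r}=0$ because $a>b-1$, and $\fl{((b,r)-1)/s}=0$ because $(b,a)=1$. Substituting into \eqref{eq:cab} and \eqref{eq:dab} yields $\cab=b-1$ and $\dab=0$, so $\pab(n)=(b-1)n$, as claimed in \eqref{eq:pab-integer2}.

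Since the entire argument is direct substitution into Theorem \ref{thm:main}, no substantive obstacle arises; the only point requiring care is in part (ii), where one must recognize that the coprimality of $a$ and $b$ forces $a$ itself to play the role of the numerator $r$ in \eqref{eq:a=bkrs}, with all of the $b$-content of the representation absorbed into the denominator $s=b^k$.
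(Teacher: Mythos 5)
Your proof is correct and follows essentially the same route as the paper's: both arguments simply identify the parameters $r$ and $s$ in the representation \eqref{eq:a=bkrs} (namely $r=a$, $s=1$ in case (i) and $r=a$, $s=b^k$ in case (ii)) and substitute into the formulas \eqref{eq:cab} and \eqref{eq:dab} of Theorem \ref{thm:main}. Your additional remarks verifying admissibility are fine, though strictly redundant since the hypothesis already assumes $a$ is not an integral power of $b$.
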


\begin{proof}
The assumptions on $a$ and $b$ ensure that the pair $(a,b)$ is admissible, 
so we can apply the formulas of Theorem \ref{thm:main}.
Since $\pab(n)=\cab n + \dab$ and $\dab=b-1-\cab$ (see \eqref{eq:dab}),
it suffices to show that
\begin{equation}
\label{eq:dab-integer}
\dab=\begin{cases}
\Fl{\frac{b-1}{a}}+(a,b)-1
&\text{if $1<a<b$,}
\\
0
&\text{if $a>b$ and $(a,b)=1$.}
\end{cases}
\end{equation}

Suppose first that $a$ is an integer satisfying $1<a<b$.
Then in \eqref{eq:a=bkrs} we have $r=a$ and $s=1$.
Hence the last term in 
\eqref{eq:dab} reduces to $(b,r)-1=(b,a)-1$, and 
\eqref{eq:dab-integer} follows.

Now suppose that $a$ is an integer satisfying $a>b$ and $(a,b)=1$. Then 
in the representation \eqref{eq:a=bkrs} we have $r=a$ and $s=b^k$, where 
$k$ is such that $b^k<a<b^{k+1}$.
Therefore
$(b,r)=(b,a)=1$, and hence
$\fl{((b,r)-1)/s}=0$.
Moreover, the assumption $a>b$ implies $\fl{(b-1)/r}=\fl{(b-1)/a}=0$.
Hence both terms on the right of \eqref{eq:dab} are $0$, and we obtain 
$\dab=0$, as claimed.
\end{proof}

\begin{ex} In base $b=10$, formula \eqref{eq:pab-a-integer1} gives 
$c_{a,10}=10-\fl{9/a}-(a,10)$ as the slope of the complexity function
of $S_{a,10}$,
Substituting $a=2,3,\dots,9$ in this formula yields the 
slopes $4,6,6,4,7,8,7,8$, respectively.
By \eqref{eq:dab}, the corresponding intercepts are given by 
$d_{a,10}=9-c_{a,10}$, so the associated complexity functions are 
$4n+5,6n+3,6n+3,4n+5,7n+2,8n+1,7n+2,8n+1$, respectively.  
These are the functions shown in the last row of Table
\ref{table:main-theorem-examples}.
\end{ex}

\begin{cor}[Special Case: $a=2$]
\label{cor:a=2}
Let $b$ be a squarefree integer $\ge 5$.
Then the complexity
function of the leading digit sequence of $2^n$  in base $b$ is given by 
\begin{equation}
\label{eq:pab-a=2}
p_{2,b}(n)=\Fl{\frac{b-1}{2}}n + \Ceil{\frac{b-1}{2}}, \quad n=1,2,\dots
\end{equation}
\end{cor}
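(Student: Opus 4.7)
The plan is to derive this statement as an immediate specialization of Corollary \ref{cor:integer-a}(i), with a short case analysis depending on the parity of $b$.

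First, I would verify that the hypotheses of Corollary \ref{cor:integer-a}(i) apply with $a=2$: namely, $a=2$ is a positive integer satisfying $1<2<b$ (since $b\ge 5$), and $2$ is clearly not an integral power of any $b\ge 5$. Substituting $a=2$ into \eqref{eq:pab-a-integer1} gives
\begin{equation*}
p_{2,b}(n) = \left(b-\Fl{\frac{b-1}{2}}-(2,b)\right)n + \Fl{\frac{b-1}{2}}+(2,b)-1.
\end{equation*}
It then remains to check that the slope and intercept above agree with $\Fl{(b-1)/2}$ and $\Ceil{(b-1)/2}$, respectively.

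Next, I would split into two cases. If $b$ is odd, then $(2,b)=1$ and $b-1$ is even, so $\Fl{(b-1)/2}=\Ceil{(b-1)/2}=(b-1)/2$, and a direct computation shows the slope becomes $b-(b-1)/2-1=(b-1)/2$ and the intercept becomes $(b-1)/2$, matching \eqref{eq:pab-a=2}. If $b$ is even (which is still allowed, since $b$ only needs to be squarefree, e.g.\ $b=6,10,14,\dots$), then $(2,b)=2$ and $b-1$ is odd, so $\Fl{(b-1)/2}=(b-2)/2$ and $\Ceil{(b-1)/2}=b/2$; a similar short computation yields slope $b-(b-2)/2-2=(b-2)/2=\Fl{(b-1)/2}$ and intercept $(b-2)/2+2-1=b/2=\Ceil{(b-1)/2}$, again matching \eqref{eq:pab-a=2}.

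Since the result follows at once by specialization, there is no real obstacle; the only step requiring any attention is verifying that the two cases for $(2,b)$ and the two parities of $b-1$ combine to produce the uniform formula \eqref{eq:pab-a=2}.
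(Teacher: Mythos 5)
Your proposal is correct and follows essentially the same route as the paper: both specialize Corollary \ref{cor:integer-a}(i) to $a=2$ and check the two parity cases of $b$, with the only cosmetic difference being that the paper computes the intercept via $d_{2,b}=b-1-c_{2,b}$ while you evaluate it directly from \eqref{eq:pab-a-integer1}. All of your case computations check out.
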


\begin{proof}
The requirement that $b$ is squarefree and $\ge5$ 
ensures that $(2,b)$ is
admissible.  We can therefore apply 
Corollary \ref{cor:integer-a} to get 
\[
c_{2,b}=b-\Fl{\frac{b-1}{2}} -(2,b).
\]
If $b$ is even, this reduces to 
\[
c_{2,b}=b-\frac{b-2}{2} -2=\frac{b-2}{2}=\Fl{\frac{b-1}{2}},
\]
while for $b$ odd we get
\[
c_{2,b}=b-\frac{b-1}{2} -1=\frac{b-1}{2}=\Fl{\frac{b-1}{2}},
\]
so in either case we have 
\[
c_{2,b}=\Fl{\frac{b-1}{2}}.
\]
By the formulas \eqref{eq:dab} and \eqref{eq:pab-rational}, it follows
that 
\[
d_{2,b}=b-1-c_{2,b}=b-1-\Fl{\frac{b-1}{2}}=\Ceil{\frac{b-1}{2}}
\]
and 
\[
p_{2,b}(n)=c_{2,b} n + d_{2,b} = \Fl{\frac{b-1}{2}}n +
\Ceil{\frac{b-1}{2}}, 
\]
as claimed.
\end{proof}

\begin{cor}[Symmetry Property]
\label{cor:symmetry}
Let $(a,b)$ be an admissible pair. Then 
$(b/a,b)$ is admissible and the sequences $\Sab$ and
$S_{b/a,b}$ have the same complexity function, i.e., we have
\begin{equation}
\label{eq:symmetry}
\pab(n)=p_{b/a,b}(n), \quad n=1,2,\dots
\end{equation}
\end{cor}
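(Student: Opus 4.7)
The plan is to verify admissibility of $(b/a, b)$ and then establish the equality of complexity functions by recasting both sequences as symbolic codings of mutually inverse rotations on $\TT$ and exhibiting a block-reversal bijection. The admissibility of $(b/a, b)$ is immediate: $b$ is unchanged, $b/a$ is a positive rational, and if $b/a$ were an integral power $b^m$ of $b$ then $a = b^{1-m}$ would also be an integral power of $b$, contradicting admissibility of $(a, b)$.

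Set $\alpha = \log_b a$, which is irrational by Remark~(1) following Theorem~\ref{thm:main} (using that $b$ is squarefree and $a$ is not an integral power of $b$). Let $T_\alpha(x) = \{x+\alpha\}$ be the associated rotation on $\TT = [0,1)$ and let $I_d = [\log_b d, \log_b(d+1))$, $d = 1, \dots, b-1$, be the partition of $[0,1)$ into half-open ``digit arcs''. Then $D_b(a^n) = d$ iff $\{n\alpha\} \in I_d$, so $\Sab$ is the symbolic coding of the orbit $\{\{m\alpha\}\}_{m \ge 1}$. A block $(d_1, \dots, d_L)$ appears in $\Sab$ exactly when some $\{m\alpha\}$ lies in
\[
E_\alpha(d_1, \dots, d_L) := \bigcap_{j=1}^{L} \bigl(I_{d_j} - (j-1)\alpha\bigr) \pmod{1};
\]
since a finite intersection of half-open arcs on $\TT$ is itself a finite union of half-open arcs, nonemptiness is equivalent to nonempty interior, and then density of $\{\{m\alpha\}\}_{m \ge 1}$ ensures that the block appears iff $E_\alpha(d_1, \dots, d_L) \neq \emptyset$.

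For $S_{b/a, b}$ the rotation parameter is $\alpha' = \log_b(b/a) = 1 - \alpha$, so $-(j-1)\alpha' \equiv (j-1)\alpha \pmod{1}$ and
\[
E_{\alpha'}(d_1, \dots, d_L) = \bigcap_{j=1}^{L} \bigl(I_{d_j} + (j-1)\alpha\bigr) \pmod{1}.
\]
Translating by $-(L-1)\alpha$ and reindexing via $j \mapsto L-j+1$ identifies this set with $E_\alpha(d_L, d_{L-1}, \dots, d_1)$ up to a translation, so $(d_1, \dots, d_L)$ is a block of $S_{b/a, b}$ iff its reversal $(d_L, \dots, d_1)$ is a block of $\Sab$. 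Since reversal is a bijection on length-$L$ tuples, this yields $p_{b/a, b}(L) = \pab(L)$ for all $L \ge 1$. The delicate ingredient is the equivalence ``block appears iff $E_\alpha$ is nonempty'', which relies on irrationality of $\alpha$ (for density of the orbit) and on the stability of the half-open-arc structure under finite intersection; once this is in place, the rest of the argument is a direct computation with rotations on $\TT$.
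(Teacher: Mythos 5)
Your proof is correct, but it takes a genuinely different route from the paper's. The paper deduces the symmetry from the explicit formulas of Theorem \ref{thm:main}: it writes $a=r/s$ and $b/a=r'/s'$ in lowest terms, identifies $r'=s\,b/(b,r)$ and $s'=r/(b,r)$, and then verifies by a floor-function identity that $\dab=d_{b/a,b}$, which forces the two linear complexity functions to coincide since $\cab+\dab=b-1$. Your argument instead works directly at the level of the symbolic dynamics: since $\log_b(b/a)\equiv-\log_b a \pmod 1$, the coding of the rotation by $\alpha'=1-\alpha$ against the digit partition produces, after translating by $(L-1)\alpha$ and reindexing, exactly the reversed blocks of $\Sab$, so reversal is a bijection between the length-$L$ blocks of the two sequences. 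Your key technical point --- that a nonempty finite intersection of half-open arcs on $\TT$ contains a nondegenerate half-open arc, so density of the orbit converts nonemptiness into actual occurrence of the block --- is sound and is essentially the content of the paper's Lemma \ref{lem:p(k)-Lk}. What your approach buys is generality and a stronger conclusion: it proves that the block sets are reversals of one another (not merely equinumerous), it bypasses Theorem \ref{thm:main} entirely, and it needs only the irrationality of $\log_b a$, so it applies to any $a$ that is not a rational power of $b$ without the squarefreeness hypothesis. What the paper's computation buys is that it stays within the elementary arithmetic framework already set up for Theorem \ref{thm:main} and doubles as a consistency check on the formulas \eqref{eq:cab}--\eqref{eq:dab}. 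Your treatment of admissibility of $(b/a,b)$ matches the paper's and is fine.
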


\begin{ex}
The symmetry property can be used to explain some (but not all) 
of the coincidences of complexity functions shown in Table
\ref{table:main-theorem-examples}. 
For example, in base $10$ 
the leading digit sequences of $\{2^n\}$ and $\{5^n\}$ 
both have complexity function $4n+5$. 
In base $6$, the leading digit sequences of $\{2^n\}$ and $\{3^n\}$ 
both have complexity function $2n+3$. Since $5=10/2$ and $3=6/2$, 
these relations follow from the symmetry property.
\end{ex}

\begin{proof} 
First note that $a$ is an integral power of $b$ if and only if $b/a$ 
is an integral power of $b$. Thus $(a,b)$ is admissible if and only if
$(b/a,b)$ is admissible. This proves the first assertion of the theorem.

Let $(a,b)$ be an admissible pair.
To prove \eqref{eq:symmetry}, it
suffices to show that $\dab=d_{b/a,b}$, since this implies 
$\cab=c_{b/a,b}$ by the first identity in \eqref{eq:dab}, 
and hence $\pab(n)=\cab n+ \dab = p_{b/a,b}(n)$.

Replacing $a$ by $ab^{-k}$ with a suitable integer $k$ if
necessary, we may assume that $a$ lies in the range $1<a<b$.
Let $r/s$ be the representation of $a$ as a reduced rational number,
as given by  \eqref{eq:a=bkrs}.
Then \eqref{eq:dab} gives 
\begin{equation}
\label{eq:dab-symmetry1}
\dab=\Fl{\frac{b-1}{r}}+\Fl{\frac{(b,r)-1}{s}}.
\end{equation}

Now consider $a'=b/a$, and let $r'/s'$ be the representation of $a'$
as a reduced rational number.  Since $1<a<b$ we have $1<a'<b$, so formula
\eqref{eq:dab} applies again with $r$ and $s$ replaced by $r'$ and $s'$,
respectively, to give
\begin{equation}
\label{eq:dab-symmetry2}
d_{a',b}=\Fl{\frac{b-1}{r'}}+\Fl{\frac{(b,r')-1}{s'}}.
\end{equation}
To prove the result, it suffices to show that the expressions on the right
of \eqref{eq:dab-symmetry1} and \eqref{eq:dab-symmetry2} are equal.

Substituting $a=r/s$ into the definition of $a'$ gives 
\begin{equation}
\label{eq:symmetry-aprime}
a'=\frac{b}{a}
=\frac{bs}{r}=\frac{sb/(b,r)}{r/(b,r)} =\frac{s\tildeb }{\tilder }, 
\end{equation}
where
\begin{equation}
\label{eq:symmetry-btilde}
\tildeb =\frac{b}{(b,r)},\quad \tilder =\frac{r}{(b,r)}.
\end{equation}
Since $(\tildeb ,\tilder )=1$ and $(r,s)=1$, the numerator and
denominator in the fraction 
on the right of \eqref{eq:symmetry-aprime}
are coprime and hence must be equal to the quantities 
$r'$ and $s'$ in \eqref{eq:dab-symmetry2}; that is, we have 
\begin{equation}
\label{eq:symmetry-rprime}
r'=s\tildeb ,\quad s'=\tilder .
\end{equation}
It follows that
\begin{equation}
\label{eq:symmetry-gcd}
(b,r')=(b,s\tildeb )=\tildeb ((b,r),s)=\tildeb ,
\end{equation}
since $(r,s)=1$.
Substituting 
\eqref{eq:symmetry-rprime} and \eqref{eq:symmetry-gcd}
into \eqref{eq:dab-symmetry2}, we get 
\begin{equation}
\label{eq:dab-symmetry2a}
d_{a',b}=\Fl{\frac{b-1}{s\tildeb }}
+\Fl{\frac{\tildeb -1}{\tilder }}
=\Fl{\frac{(b,r)-1/\tildeb }{s}}
+\Fl{\frac{\tildeb -1}{\tilder }}.
\end{equation}
On the other hand, \eqref{eq:dab-symmetry1} can be written as 
\begin{equation}
\label{eq:dab-symmetry1a}
\dab=
\Fl{\frac{\tildeb -1/(b,r)}{\tilder }}
+\Fl{\frac{(b,r)-1}{s}}.
\end{equation}
Comparing \eqref{eq:dab-symmetry2a} and \eqref{eq:dab-symmetry1a}, 
we see that the equality of these expressions will follow
if we show that
\begin{equation*}
\Fl{\frac{(b,r)-1/\tildeb }{s}}
= \Fl{\frac{(b,r)-1}{s}}
\text{\ and\ }
\Fl{\frac{\tildeb -1/(b,r)}{\tilder }}
=
\Fl{\frac{\tildeb -1}{\tilder }}.
\end{equation*}
But this follows from the identity
\begin{equation*}
\Fl{\frac{h-x}{k}}=\Fl{\frac{h-1}{k}} \quad (0<x\le 1,\ h,k\in\NN),
\end{equation*}
which holds since
the open interval $((h-1)/k,h/k)$
does not contain an integer. 
\end{proof}

%%%%%%%%%%%%%%%%%%%%%%%%%%%%%%%
% section 3
%%%%%%%%%%%%%%%%%%%%%%%%%%%%%%%

\section{Proof of Theorems \protect\ref{thm:main} and
\protect\ref{thm:main2}}
\label{sec:proof}

We begin with two known results that we will need in the course of the
proof.  We recall that $D_b(x)$ denotes the leading digit of $x$ in base $b$,
as defined in \eqref{eq:Db(x)}, and that $\fp{t}$ denotes the fractional
part of $t$, defined as $\fp{t}=t-\fl{t}$.  The following lemma relates
$D_b(x)$ to the fractional part $\fp{\lgb x}$. This connection is well-known in
the literature on Benford's Law (see, e.g., \cite{benford1938} or
\cite{diaconis1977}).

\begin{lem}[Leading digit criterion]
\label{lem:leading-digit-criteria}
Let $x$ be a positive real number and $b$ an integer $\ge3$. Then
for any digit $d\in\{1,2,\dots,b-1\}$ we have
\begin{equation}
\label{eq:leading-digit-criteria}
D_b(x)=d\Longleftrightarrow \fp{\lgb x} \in [\lgb d, \lgb (d+1)).
\end{equation}
\end{lem}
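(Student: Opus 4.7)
The plan is to translate the defining double inequality for $D_b(x)=d$ directly into a statement about $\{\log_b x\}$ by taking logarithms base $b$, and then verify that the resulting interval falls inside $[0,1)$ so that the integer $k$ in the definition is forced to be the integer part of $\log_b x$.

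First I would unwind the definition \eqref{eq:Db(x)}: $D_b(x)=d$ means there is some $k\in\ZZ$ with $d\,b^k\le x<(d+1)b^k$. Applying $\lgb$, which is strictly increasing since $b\ge 3$, this is equivalent to
\[
\lgb d + k \le \lgb x < \lgb(d+1) + k,
\]
or, rearranging,
\[
\lgb d \le \lgb x - k < \lgb(d+1).
\]

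Next I would use the range constraint on $d$. Since $1\le d\le b-1$, we have $1\le d<d+1\le b$, hence
\[
0\le \lgb d < \lgb(d+1)\le 1.
\]
This shows that $\lgb x - k$ lies in $[0,1)$, which forces $k=\fl{\lgb x}$ and therefore $\lgb x - k = \fp{\lgb x}$. So the pair of inequalities above becomes exactly $\fp{\lgb x}\in[\lgb d,\lgb(d+1))$.

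For the converse direction, I would start from $\fp{\lgb x}\in[\lgb d,\lgb(d+1))$, set $k=\fl{\lgb x}$, and retrace the steps: the containment gives $\lgb d\le \lgb x - k<\lgb(d+1)$, and exponentiating base $b$ yields $d\,b^k\le x<(d+1)b^k$, which is exactly the condition $D_b(x)=d$. The only subtlety is noting that the digits $d\in\{1,\dots,b-1\}$ partition the interval $[0,1)$ into the disjoint subintervals $[\lgb d,\lgb(d+1))$, so the correspondence is a bijection; this will be needed implicitly for well-definedness but is immediate from the monotonicity of $\lgb$. There is no real obstacle here — the result is essentially a change of variables $x\mapsto \lgb x$ — but the one point to handle carefully is the boundary case $d=b-1$, where the upper endpoint $\lgb(d+1)=1$ is excluded, consistent with the half-open interval convention in \eqref{eq:Db(x)}.
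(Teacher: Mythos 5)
Your proof is correct and follows essentially the same route as the paper: both take base-$b$ logarithms of the defining inequality and use the fact that $0\le \lgb d<\lgb(d+1)\le 1$ for $1\le d\le b-1$ to identify $\lgb x-k$ with $\fp{\lgb x}$. Your version just makes explicit the step $k=\fl{\lgb x}$ that the paper leaves implicit.
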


\begin{proof}
By the definition of $D_b(x)$, we have 
\begin{align*}
D_b(x)=d&\Longleftrightarrow d\cdot b^k\le x< (d+1)b^k\quad
\text{for some $k\in\ZZ$}
\\
&\Longleftrightarrow \lgb d+ k\le \lgb x <  \lgb(d+1)+ k\quad
\text{for some $k\in\ZZ$}
\\
&\Longleftrightarrow \lgb d \le \fp{\lgb x} <  \lgb(d+1)
\\
&\Longleftrightarrow \fp{\lgb x} \in [\lgb d, \lgb (d+1)),
\end{align*}
where we used the fact that $0\le \lgb d<\lgb(d+1)\le 1$ for $1\le d\le
b-1$.
\end{proof}

The following result is well-known; see, e.g., \cite[Theorem
439]{hardy-wright}.

\begin{lem}
\label{lem:dirichlet}
Let $\alpha$ be an irrational number. Then the sequence $\{n \alpha\}$ is
dense in the interval $[0,1)$.
\end{lem}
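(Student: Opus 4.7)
The plan is to prove Dirichlet's theorem by a pigeonhole argument. Fix a target $x \in [0,1)$ and an $\varepsilon > 0$, and choose an integer $N$ with $1/N < \varepsilon$. I would consider the $N+1$ fractional parts $\fp{j\alpha}$ for $j = 0, 1, \dots, N$, all of which lie in $[0,1)$. Partitioning $[0,1)$ into $N$ subintervals of length $1/N$ and applying the pigeonhole principle, two of these fractional parts, say $\fp{j\alpha}$ and $\fp{k\alpha}$ with $0 \le j < k \le N$, must fall in the same subinterval, so $|\fp{k\alpha} - \fp{j\alpha}| < 1/N$.

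Setting $m = k - j$, a short computation shows that $m\alpha$ differs from the integer $\fl{k\alpha} - \fl{j\alpha}$ by less than $1/N$ in absolute value. Because $\alpha$ is irrational, $m\alpha$ cannot itself be an integer, so the distance $\delta$ from $m\alpha$ to the nearest integer satisfies $0 < \delta < 1/N < \varepsilon$.

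The last step is to iterate. The orbit $\fp{im\alpha}$ for $i = 1, 2, 3, \dots$, viewed on the circle $\RR / \ZZ$, advances by a signed amount of magnitude exactly $\delta$ at each step, so consecutive iterates in the circular ordering are separated by at most $\delta$. Hence at least one iterate falls within $\delta < \varepsilon$ of $x$, and taking $n = im$ yields $|\fp{n\alpha} - x| < \varepsilon$, which establishes the density.

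The main obstacle is justifying this final density statement precisely: one must show that the first $\ceil{1/\delta}$ iterates subdivide the circle into arcs each of length at most $\delta$. A clean way to do this is to observe that, whether the rotation amount $\fp{m\alpha}$ equals $\delta$ or $1 - \delta$, the iterates appear on the circle in monotone order with uniform step $\delta$ until they wrap around, and the remaining gap at the wraparound is itself at most $\delta$.
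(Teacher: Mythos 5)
The paper does not actually prove this lemma: it is quoted as a known result with a citation to Hardy and Wright (Theorem 439), so there is no in-paper argument to compare against. Your self-contained proof is the classical one and is correct. The pigeonhole step correctly produces $m=k-j\ge1$ with $0<\delta<1/N<\varepsilon$, where $\delta$ is the distance from $m\alpha$ to the nearest integer (the irrationality of $\alpha$ is used exactly where you use it, to rule out $\delta=0$). The wrap-around issue you flag at the end is real but is handled cleanly by the observation you sketch: writing $m\alpha=M\pm\delta$ with $M\in\ZZ$ gives $\fp{im\alpha}=\fp{\pm i\delta}$, so for $1\le i\le\fl{1/\delta}$ the iterates are the arithmetic progression $i\delta$ (or its reflection $1-i\delta$), which partitions the circle into arcs of length $\delta$ plus one final arc of length $1-\fl{1/\delta}\,\delta<\delta$; hence every $x\in[0,1)$ lies within $\delta<\varepsilon$ of some $\fp{im\alpha}$ with $n=im\ge1$. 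What your argument buys over the paper's bare citation is an elementary, fully effective proof (it even yields the quantitative statement that the first $O(N\cdot\ceil{1/\delta})$ terms form an $\varepsilon$-net), at the cost of a page of routine verification that the authors evidently preferred to outsource.
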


\begin{cor}
\label{cor:dirichlet}
If $(a,b)$ is an admissible pair, then the sequence $\{n\lgb a\}$ is dense
in the interval $[0,1)$.
\end{cor}

\begin{proof}
Assume $(a,b)$ is an admissible pair.  By Lemma \ref{lem:dirichlet}
it suffices to show that, if $(a,b)$ is admissible, then $\lgb a$ is irrational,
i.e., $a$ is not a rational power of $b$.

We argue by contradiction. Suppose $(a,b)$ is admissible and $a$ is a rational
power of $b$, say $a=b^{p/q}$, where $p$ and $q$ are coprime positive integers.
The admissibility condition implies that $a$ is a positive rational number,
i.e., of the form $a=m/n$, where $m$ and $n$ are coprime positive integers, and
that $b$ is squarefree, i.e., of the form $b=p_1\dots p_k$, where the $p_i$ are
distinct primes. Substituting these representations into the relation
$a=b^{p/q}$, we obtain $m/n=(p_1\dots p_k)^{p/q}$, or equivalently
$m^q=n^qp_1^p\dots p_k^p$. 

Since $m$ and $n$ are coprime, this can only hold
if $n=1$. Hence we must have $m^q=p_1^p\dots p_k^p$. 
By the Fundamental Theorem of Arithmetic it follows that $m$ must of the form
$m=p_1^{\alpha_1}\dots p_k^{\alpha_k}$ with nonnegative integer 
exponents $\alpha_i$, and that $q\alpha_i = p$ for all $i$. 
This is only possible if $p/q=h$ is a positive integer and
$\alpha_i=h$ for all $i$, i.e., if $m=(p_1\dots p_k)^h=b^h$.
But then $a=m/n=m=b^h$ is an integral power of $b$, 
contradicting the admissibility condition.  This completes the proof.
\end{proof}

For the remainder of this section we fix an admissible pair $(a,b)$. 
Thus $b\ge 5$ is squarefree and $a$ is not an integral power of $b$.
We remark that the assumption that $b$ is squarefree will only be needed
in the latter part of the proof of Theorem \ref{thm:main} (beginning with
Lemma \ref{lem:Lk}); Lemmas \ref{lem:p(k)-Lk} and \ref{lem:L1-2} hold
without this assumption.

Dividing $a$ by  a power of $b$
if necessary, we may assume without loss of generality that $1<a<b$,
so that $a=r/s$, where $r$ and $s$ are as in Theorem \ref{thm:main} (see
\eqref{eq:a=bkrs}). The assumption $1<a<b$ then implies 
\begin{equation}
\label{eq:rs-bound}
1<\frac{r}{s}<b.
\end{equation}

We introduce the following notations: 
\begin{align}
\label{eq:def-a}
\alpha&= \lgb a = \lgb \frac{r}{s},
\\
\label{eq:def-D}
D&=\{1,2,\dots,b-1\},
\\
\label{eq:def-L}
L&=\lgb D = \{\lgb 1, \lgb 2,\dots,\lgb(b-1)\},
\\
\label{eq:def-Lk}
L_k&=\bigcup_{i=0}^{k-1}(L-i\alpha)=\{\lgb d -i\lgb a: d\in D,\
i=0,1,\dots,k-1\}.
\end{align}
We regard the sets $L_k$ as subsets of the one-dimensional torus
$\TT=\RR/\ZZ$ by identifying elements that differ by an integer.
The following key result relates the sets $L_k$ to the complexity functions
that we seek to evaluate.  More general results of this type are known  
in the context of codings of irrational rotations (see, e.g., \cite[Theorem
10]{alessandri-berthe}). For the sake of completeness, we provide a
self-contained proof here. 

\begin{lem}
\label{lem:p(k)-Lk}
Let $p(k)=\pab(k)$ be the complexity function of the leading digit
sequence $\Sab$. Then 
\begin{equation}
\label{eq:p(k)-Lk}
p(k)=|L_k|\quad (k=1,2,\dots),
\end{equation}
where $|L_k|$ denotes the cardinality of $L_k$.
\end{lem}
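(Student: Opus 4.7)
The plan is to reformulate the problem on the one-dimensional torus $\TT=\RR/\ZZ$ and exploit the fact that, since $(a,b)$ is admissible, Corollary~\ref{cor:dirichlet} guarantees both that $\alpha=\lgb a$ is irrational and that the sequence $\{\fp{m\alpha}:m\ge 1\}$ is dense in $[0,1)$. First I would apply Lemma~\ref{lem:leading-digit-criteria} coordinate-wise: for each $m\ge 1$ and each tuple $(d_0,\dots,d_{k-1})\in D^k$, the block $(D_b(a^m),\dots,D_b(a^{m+k-1}))$ equals $(d_0,\dots,d_{k-1})$ if and only if $\fp{m\alpha}$ lies in the cell
\[
J(d_0,\dots,d_{k-1})=\bigcap_{i=0}^{k-1}\bigl([\lgb d_i,\lgb(d_i+1))-i\alpha\bigr)\pmod 1.
\]
These cells partition $\TT$ and their collective boundary set is precisely $L_k$, so $L_k$ divides $\TT$ into exactly $|L_k|$ arcs, each contained in a unique cell. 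Density of $\{\fp{m\alpha}\}$ implies every arc is visited, so $p(k)$ equals the number of non-empty cells; this already yields the upper bound $p(k)\le|L_k|$.

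The heart of the proof, and the step I expect to be the main obstacle, is the reverse inequality: distinct arcs of $\TT\setminus L_k$ must lie in distinct cells. I would argue this by induction on $k$. The base case $k=1$ is direct, since the arcs of $\TT\setminus L$ are precisely the intervals $[\lgb d,\lgb(d+1))$ for $d\in D$, labeled by distinct digits. For the inductive step, suppose $A'$ and $A''$ are arcs of $\TT\setminus L_k$ with the same associated tuple; they then agree in their first $k-1$ coordinates, so by the induction hypothesis they lie in a common arc $A$ of $\TT\setminus L_{k-1}$. The sub-arcs of $A$ in the finer partition are separated only by new boundary points lying in $(L-(k-1)\alpha)\setminus L_{k-1}$, and crossing such a point leaves the first $k-1$ coordinates unchanged while incrementing the $(k-1)$-th coordinate cyclically by $1$ within $\{1,\dots,b-1\}$.

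It thus suffices to show that the number $n_A$ of new boundary points inside $A$ satisfies $n_A\le b-2$, for then the $(k-1)$-th coordinate takes $n_A+1\le b-1$ distinct cyclic values across the sub-arcs of $A$, forcing $A'=A''$. This bound will come from a length comparison on $\TT$. The largest gap between consecutive points of $L$ on the circle is $\lgb 2$ (occurring between $\lgb 1=0$ and $\lgb 2$), so every arc of $\TT\setminus L_{k-1}\subseteq\TT\setminus L$ has length at most $\lgb 2$. On the other hand, since $L-(k-1)\alpha$ is a rigid translate of $L$ and so inherits the same gap structure, the smallest arc of $\TT$ containing all $b-1$ of its points has length $1-\lgb 2=\lgb(b/2)$. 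This strictly exceeds $\lgb 2$ precisely when $b\ge 5$, so $A$ cannot contain the entire translate $L-(k-1)\alpha$, which gives $n_A\le b-2$ and closes the induction.
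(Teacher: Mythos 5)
Your proof is correct, and its overall reduction coincides with the paper's: both translate the occurrence of a block $(d_0,\dots,d_{k-1})$ into membership of $\fp{m\alpha}$ in the cell $\bigcap_{i=0}^{k-1}\bigl([\lgb d_i,\lgb(d_i+1))-i\alpha\bigr)$, invoke the density of $\{\fp{m\alpha}\}$ from Corollary \ref{cor:dirichlet} to equate $p(k)$ with the number of non-empty cells, and then identify the non-empty cells with the arcs into which $L_k$ cuts $\TT$. Where you genuinely diverge is in the one step that actually uses the hypothesis $b\ge 5$, namely showing that each non-empty cell is a single arc (equivalently, that distinct arcs of $\TT\setminus L_k$ carry distinct digit tuples). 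The paper disposes of this in a footnote: each interval $[\lgb d,\lgb(d+1))$ has length at most $1/2$, so an intersection of translates of such intervals on the circle is either empty or a single arc. Your inductive substitute---bounding the number of points of $L-(k-1)\alpha$ interior to an arc $A$ of $\TT\setminus L_{k-1}$ by $b-2$ via the gap comparison $\lgb 2<\lgb(b/2)$, and then observing that the last coordinate runs through at most $b-1$ consecutive values in the cyclic order $1,2,\dots,b-1,1,\dots$ and hence cannot repeat within $A$---is valid and more combinatorial. It is longer than the paper's one-line convexity-style observation, but it makes explicit the mechanism (cyclic advancement of the new coordinate) that the paper leaves implicit; note that your version needs $b\ge5$ strictly, whereas the paper's length argument would already work for $b\ge4$. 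One minor bookkeeping point: the arcs of $\TT\setminus L_k$ should be taken half-open, matching the half-open cells, so that points of $L_k$ are assigned unambiguously; this does not affect the substance of your argument.
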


\begin{proof}
Recall that $p(k)$ denotes the number of blocks of length $k$ in the
sequence $\Sab$, i.e., the number of distinct 
tuples $(d_0,\dots,d_{k-1})$ of digits $d_i\in D$ such that, 
for some $n\in\NN$, 
\begin{equation}
\label{eq:p(k)-Lk1}
D_b(a^{n+i})=d_i,\quad i=0,1,\dots,k-1.
\end{equation}
Using Lemma \ref{lem:leading-digit-criteria} we see that
(recall that, by \eqref{eq:def-a}, $\alpha=\lgb a$)
\begin{align*}
D_b(a^{n+i})=d_i
&\Longleftrightarrow \fp{\lgb (a^{n+i})}\in [\lgb d_i, \lgb (d_i+1))
\\
&\Longleftrightarrow \fp{(n+i)\lgb a}\in [\lgb d_i, \lgb (d_i+1))
\\
&\Longleftrightarrow \fp{(n+i)\alpha}\in [\lgb d_i, \lgb (d_i+1))
\\
&\Longleftrightarrow n\alpha\in [\lgb d_i-i\alpha, \lgb
(d_i+1)-i\alpha)+m_i
\quad\text{for some $m_i\in\ZZ$.}
\end{align*}
It follows that \eqref{eq:p(k)-Lk1} holds if and only if 
\begin{equation}
\label{eq:p(k)-Lk2}
n\alpha\in \bigcap_{i=0}^{k-1}
\Bigl[\lgb d_i-i\alpha+m_i, \lgb (d_i+1)-i\alpha+m_i\Bigr)
\end{equation}
for some $m_0,\dots,m_{k-1}\in\ZZ$.
Interpreting both sides of \eqref{eq:p(k)-Lk2} as elements of 
$\TT=\RR/\ZZ$, we can rewrite this relation as  
\begin{equation}
\label{eq:p(k)-Lk3}
n\alpha\in \bigcap_{i=0}^{k-1}
\Bigl[\lgb d_i-i\alpha, \lgb (d_i+1)-i\alpha\Bigr)\quad
\text{in $\TT$.}
\end{equation}

Now, note that, by Corollary \ref{cor:dirichlet}, the sequence $\{n\alpha\}$ is
dense in the unit interval $[0,1)$.  Thus, if the interval on the right of 
\eqref{eq:p(k)-Lk3} is non-empty, it must contain an element of this sequence.
Hence, given any $k$-tuple 
$(d_0,\dots,d_{k-1})$ of digits in $D$ for which the interval  
on the right of \eqref{eq:p(k)-Lk3} is non-empty, there exists an
$n\in\NN$ such that \eqref{eq:p(k)-Lk1} holds for this tuple, i.e.,  
the tuple $(d_0,\dots,d_{k-1})$ occurs as a 
block of length $k$ in the sequence $\Sab$.
Conversely, if $(d_0,\dots,d_{k-1})$ is a block of length $k$ occurring
in $\Sab$, then there exists an $n$ such that 
relation \eqref{eq:p(k)-Lk3} holds, so the interval on the right of
\eqref{eq:p(k)-Lk3} must be non-empty. 

It follows%
\footnote{Our
assumption $b\ge5$ ensures that each of the intervals $[\lgb d, \lgb(d+1))$,
$d=1,2,\dots,b-1$, has length $\le1/2$. Hence the intersection of such
an interval with translates of other intervals of this
type is either empty or consists of a \emph{single} interval in $\TT$.
This is not necessarily true for intervals of length $>1/2$:
for example, the intersection of the
interval $[0,2/3]$ with its translate by $1/2$ consists of the two disjoint
intervals $[0,1/6]$ and $[1/2,2/3]$.}
that \emph{the number of 
blocks of length $k$ in the sequence $\Sab$
(and hence the value of the complexity function $p(k)$) 
is equal to the number of non-empty intervals in $\TT$ generated 
on the right of \eqref{eq:p(k)-Lk3} as each
$d_i$ runs through the digits in $D=\{1,2,\dots,b-1\}$.}
But these intervals are
exactly the intervals obtained by splitting up $\TT$ at the points
\begin{equation}
\label{eq:p(k)-Lk4}
\lgb d- i \alpha \in \TT,\quad d\in D,\quad i=0,1,\dots,k-1,
\end{equation}
so the number of such intervals is equal to the number of \emph{distinct} 
elements in \eqref{eq:p(k)-Lk4}. The latter elements form the
elements of the set $L_k$, so the desired number
is  $|L_k|$.
This completes the proof of 
Lemma \ref{lem:p(k)-Lk}.
\end{proof}

To complete the proof of Theorem \ref{thm:main}, it remains to evaluate
the numbers $|L_k|$.  As mentioned, we consider the sets
$L_k$ as subsets of $\TT$. Thus, in what follows relations involving the
elements of these
sets are to be interpreted as relations among elements in $\TT$, i.e., as
relations that hold modulo $1$.

\begin{lem}
\label{lem:L1-2}
We have 
\begin{align}
\label{eq:L1}
|L_1|&=b-1,
\\
\label{eq:L2}
|L_2|&=2(b-1)-\Fl{\frac{b-1}{r}}-\Fl{\frac{(b,r)-1}{s}}.
\end{align}
\end{lem}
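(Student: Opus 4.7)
The plan is to compute $|L_1|$ directly and reduce $|L_2|$ to a count of the overlap of $L$ and $L-\alpha$ in $\TT$. The $b-1$ numbers $\lgb 1,\lgb 2,\dots,\lgb(b-1)$ are distinct and all lie in $[0,1)$, so they remain distinct modulo $1$, giving $|L_1|=|L|=b-1$. Since $L_2 = L \cup (L-\alpha)$, inclusion--exclusion in $\TT$ yields
\[
|L_2| = 2(b-1) - |L \cap (L-\alpha)|,
\]
so it suffices to show $|L \cap (L-\alpha)| = \Fl{(b-1)/r} + \Fl{((b,r)-1)/s}$.

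An element of $L \cap (L-\alpha)$ corresponds uniquely to a pair $(d,d')\in D\times D$ with $\lgb d \equiv \lgb d' - \alpha \pmod 1$ (distinct pairs yielding distinct intersection elements since each element of $L$ and of $L-\alpha$ has a unique digit label); substituting $\alpha=\lgb(r/s)$ and clearing logarithms, this condition becomes
\[
dr = s\,d'\,b^m \quad \text{for some } m \in \ZZ.
\]
To bound $m$ I would use $d/d' = sb^m/r$ together with $d,d'\in\{1,\dots,b-1\}$: the bound $d/d'\le b-1$ combined with $r<sb$ gives $b^m \le r(b-1)/s < b^2$, so $m\le 1$; the bound $d/d'\ge 1/(b-1)$ combined with $r>s$ gives $b^m \ge r/(s(b-1)) > 1/b$, so $m\ge 0$. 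Hence $m \in \{0,1\}$.

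For $m=0$, the equation $dr=sd'$ together with $(r,s)=1$ forces $(d,d')=(st,rt)$ with $t\ge 1$; the binding constraint is $rt\le b-1$ (since $r>s$), yielding $\Fl{(b-1)/r}$ solutions. For $m=1$, I would set $g=(b,r)$, $b=gb_1$, $r=gr_1$ with $(b_1,r_1)=1$. The equation $dr=sd'b$ reduces to $dr_1 = sd'b_1$, and since $(r_1,s)=(r_1,b_1)=1$ one obtains $r_1 \mid d'$, whence $(d,d')=(sb_1 u, r_1 u)$ with $u\ge 1$; the inequality $r/s<b$ implies $r_1 < sb_1$, so the binding constraint is $sb_1 u \le b-1$, yielding $\Fl{(b-1)g/(sb)}$ solutions. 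To recast this, I would apply the identity
\[
\Fl{\frac{h-x}{k}} = \Fl{\frac{h-1}{k}} \quad (0<x\le 1,\ h,k\in\NN),
\]
invoked at the end of the proof of Corollary~\ref{cor:symmetry}, with $h=g$, $k=s$, $x=g/b \in (0,1]$, obtaining
\[
\Fl{\frac{(b-1)g}{sb}} = \Fl{\frac{g-g/b}{s}} = \Fl{\frac{g-1}{s}} = \Fl{\frac{(b,r)-1}{s}}.
\]
Combining the two counts gives \eqref{eq:L2}. The main obstacle is the $m=1$ analysis: parametrizing the solutions via the $\gcd(b,r)$ decomposition and then recognizing the floor-function identity that compresses the expression $\Fl{(b-1)(b,r)/(sb)}$ into the clean form $\Fl{((b,r)-1)/s}$.
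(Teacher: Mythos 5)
Your proof is correct and follows essentially the same route as the paper's: inclusion--exclusion reducing \eqref{eq:L2} to counting $|L\cap(L-\alpha)|$, translation of the intersection condition into $dr=sd'b^m$, the bound $m\in\{0,1\}$ from $1<r/s<b$ and the digit ranges, the $(r,s)=1$ parametrization for $m=0$, and the $\gcd(b,r)$ decomposition plus the floor identity $\Fl{(h-x)/k}=\Fl{(h-1)/k}$ for $m=1$. The only cosmetic difference is that you count solution pairs $(d,d')$ via a bijection with intersection points, while the paper counts digits $d$ and notes that each admits at most one valid $m$; these are equivalent.
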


\begin{proof} 
By definition, $L_1$ is the set $L=\{\lgb1,\dots,\lgb(b-1)\}$, which has
$b-1$ distinct elements in $\TT$. Thus, $|L_1|=|L|=b-1$, proving
\eqref{eq:L1}.

Now consider $L_2$. By definition, $L_2=L\cup (L-\alpha)$. Thus,
\begin{equation*}
\label{eq:L2-proof1}
|L_2|=|L|+|L-\alpha|-|L\cap (L-\alpha)| = 2(b-1)-|L\cap (L-\alpha)|.
\end{equation*}
To prove \eqref{eq:L2}, it therefore suffices to show 
\begin{equation}
\label{eq:L2var}
|L\cap(L-\alpha)|=\Fl{\frac{b-1}{r}}+\Fl{\frac{(b,r)-1}{s}}.
\end{equation}

For the proof of \eqref{eq:L2var}, consider 
an element $x\in L\cap (L-\alpha)$. Since $x\in L$, $x$ must be of the
form 
\begin{equation*}
x=\lgb d 
\end{equation*}
for some $d\in D$, and since $0\le \lgb 1 \le \lgb d\le
\lgb (b-1)<1$, the digit $d$ is uniquely determined by $x$.

Similarly, since $x\in L-\alpha$, $x$ must also be of the form 
\begin{equation*}
x=\lgb \dP -\lgb a + m
\end{equation*}
for some $\dP \in D$ and
$m\in \ZZ$. 
We therefore have
\begin{equation*}
\lgb d = \lgb \dP -\lgb a + m,
\end{equation*}
or equivalently,
\begin{equation*}
d= \dP a^{-1} b^m.
\end{equation*}
Since $a=r/s$, the latter relation can be written as
\begin{equation}
\label{eq:L2-proof2}
dr=\dP sb^m.
\end{equation}

We next show that the integer $m$ in \eqref{eq:L2-proof2}  (and hence
also $\dP $) is uniquely determined by $d$ (and hence by $x$), 
and that $m$ must be either
$0$ or $1$.
Since $1/(b-1)\le d/\dP \le b-1$ and $1<r/s<b$
(see \eqref{eq:rs-bound}),
\eqref{eq:L2-proof2}
implies
\begin{equation*}
b^m=(d/\dP )(r/s)< (b-1)b<b^2
\end{equation*}
and 
\begin{equation*}
b^m=(d/\dP )(r/s)> \frac{1}{b-1}> \frac1{b},
\end{equation*}
so we have $-1<m<2$ and hence either $m=0$ or $m=1$. 
Moreover, for each given element $d\in D$ 
at most one of these cases can occur. 
Indeed, rewriting \eqref{eq:L2-proof2} 
as $\dP =d(r/s)b^{-m}$, we see that the integer $m$ (if it exists) is
uniquely determined by the requirement that $1\le \dP \le b-1$. 

Therefore we have 
\begin{equation}
\label{eq:L2var2}
|L\cap (L-\alpha)|=N_0+ N_1,
\end{equation}
where $N_0$ (resp. $N_1$) is the number of elements $d\in D$ satisfying
\eqref{eq:L2-proof2} for some $\dP \in D$ with $m=0$ (resp.  $m=1$).
We will show that $N_0$ and $N_1$ are equal to the two terms 
on the right of \eqref{eq:L2var}.

Consider first the case $m=0$.  Then  \eqref{eq:L2-proof2} reduces to 
\begin{equation}
\label{eq:L2-proof2a}
dr=\dP s.
\end{equation}
Since $r$ and $s$ are
relatively prime, \eqref{eq:L2-proof2a}
can only hold if $d$ is a multiple
of $s$ and $\dP $ is the same multiple of $r$, i.e., if $d=d_0s$ and
$\dP =d_0r$ for some positive integer $d_0$.  Furthermore, since $\dP \le
b-1$, we have
\begin{equation}
\label{eq:L2-proof2ab}
d_0=\frac{\dP }{r}\le \frac{b-1}{r}.
\end{equation}

We claim that each positive integer $d_0$ satisfying
\eqref{eq:L2-proof2ab} yields a pair $(d,\dP )$
of digits in $D$ for which \eqref{eq:L2-proof2a} holds.
Indeed, setting $d=d_0s$ and $\dP =d_0r$, we have $dr=\dP s$, and the bound 
$d_0\le (b-1)/r$, along with our assumption $r/s>1$ 
(see \eqref{eq:rs-bound})
imply that both $d$ and $\dP $ are positive integers 
bounded by $\le b-1$ and hence are elements in $D$.
Thus, the number of elements in the set $L\cap (L-\alpha)$ arising 
from case $m=0$ is equal to the number of positive integers $d_0$
satisfying \eqref{eq:L2-proof2ab}, i.e., we have
\begin{equation}
\label{eq:L2-proof2aa}
N_0=\Fl{\frac{b-1}{r}}.
\end{equation}

Now suppose that $m=1$. Then \eqref{eq:L2-proof2} reduces to
\begin{equation}
\label{eq:L2-proof2b}
dr=\dP sb.
\end{equation}
Set
\[
\bp =\frac{b}{(b,r)}, \quad 
\rp =\frac{r}{(b,r)}.
\]
Dividing through by $(r,b)$, \eqref{eq:L2-proof2b} becomes 
$d\rp =\dP s\bp $. Since $\rp $ is coprime with both $s$ and $\bp $, 
this can only hold if 
\[
\dP =\dzp  \rp \quad\text{and}\quad d=\dzp s\bp 
\]
for some positive integer $\dzp $.
Since $d\le b-1$, the integer $\dzp$ must satisfy 
\begin{equation}
\label{eq:L2-proof2c}
\dzp \le \frac{b-1}{s\bp }.
\end{equation}
Conversely, every positive integer $\dzp $ satisfying \eqref{eq:L2-proof2c}
yields a pair $(d,\dP )$ of digits in $D$ satisfying \eqref{eq:L2-proof2b}.
Indeed, setting $d=\dzp s\bp $ and $\dP =\dzp \rp $,
we have 
\begin{equation*}
dr=d(r,b)\rp =\dzp s\bp (r,b)\rp =\dzp \rp sb=\dP sb,
\end{equation*}
so \eqref{eq:L2-proof2b} holds.
Moreover, $d$ and $\dP $ are both positive integers 
and the bound \eqref{eq:L2-proof2c} ensures that 
\[
d=\dzp s\bp  \le b-1
\]
and 
\[
\dP =\dzp \rp \le \frac{(b-1)}{s\bp }\rp =\frac{(b-1)(r/s)}{b}< b-1,
\]
where in the last step we used the bound $r/s<b$.
Thus the contribution of the case $m=1$ to the set $L\cap (L-\alpha)$ 
is equal to the number of positive integers $\dzp $ satisfying
\eqref{eq:L2-proof2c}, i.e., we have
\begin{equation}
\label{eq:L2-proof2bb}
N_1=\Fl{\frac{b-1}{s\bp }}
=\Fl{\frac{b}{s\bp }-\frac{1}{s\bp }}
=\Fl{\frac{(b,r)-1/\bp }{s}}
=\Fl{\frac{(b,r)-1}{s}}.
\end{equation}
Substituting \eqref{eq:L2-proof2aa}
and \eqref{eq:L2-proof2bb} into 
\eqref{eq:L2var2} 
yields the desired relation \eqref{eq:L2var}. 
\end{proof}

Up to this point, our argument did not make use of the assumption that $b$
be squarefree.  The following lemma, however, depends on this assumption in
a crucial manner.

\begin{lem}
\label{lem:Lk}
For any positive integer $k$ we have
\begin{equation}
\label{eq:Lk}
|L_{k+1}| - |L_k| = |L_2| - |L_1|.
\end{equation}
\end{lem}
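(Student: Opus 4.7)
The plan is to compute the increment $|L_{k+1}|-|L_k|$ directly and show it depends only on $|L_2|-|L_1|$. Writing $R(x)=x-\alpha$ for the translation on $\TT$, we have $L_{k+1}=L_k\cup R^k(L)$, and therefore
\[
|L_{k+1}|-|L_k|=|R^k(L)|-|R^k(L)\cap L_k|=(b-1)-|R^k(L)\cap L_k|.
\]
It thus suffices to prove that $|R^k(L)\cap L_k|$ equals its $k=1$ value, namely $|L\cap R(L)|=2(b-1)-|L_2|$.

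Next I would identify $R^k(L)\cap L_k$ arithmetically. A point $\log_b d-k\alpha$ (with $d\in D$) lies in $L_k$ iff it equals some $\log_b d'-i\alpha\pmod 1$ with $d'\in D$, $0\le i\le k-1$; setting $\ell=k-i$, this becomes $d=d'a^\ell b^m$ for some $d'\in D$, $m\in\ZZ$, and $\ell\in\{1,\dots,k\}$. Introducing
\[
\mathcal D^{(\ell)}=\{d\in D : d=d'a^\ell b^m\text{ for some }d'\in D,\ m\in\ZZ\},
\]
we obtain $|R^k(L)\cap L_k|=\bigl|\bigcup_{\ell=1}^k \mathcal D^{(\ell)}\bigr|$ and $|L\cap R(L)|=|\mathcal D^{(1)}|$. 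The lemma then reduces to the set-theoretic inclusion $\mathcal D^{(\ell)}\subseteq \mathcal D^{(1)}$ for every $\ell\ge 1$.

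To prove this inclusion, I would fix $d\in \mathcal D^{(\ell)}$ with witness $d=d_\ell a^\ell b^m$, equivalently $ds^\ell=d_\ell r^\ell b^m$ (using $a=r/s$ with $\gcd(r,s)=1$), and define the candidate $d_1=d_\ell a^{\ell-1}b^{N}$, where $N$ is the unique integer making $d_1\in[1,b)$. Then $d=d_1\,ab^{m-N}$ automatically, so it remains to verify that $d_1$ is an integer in $D$. Because $N$'s defining interval is the $+\alpha$-translate of $m$'s, monotonicity of $\lceil\cdot\rceil$ yields $N\ge m$. For integrality, $s^{\ell-1}\mid d_\ell b^N$ would be checked prime-by-prime: for $p\mid s$ with $p\nmid b$, the witness relation forces $v_p(d_\ell)=v_p(d)+\ell v_p(s)\ge(\ell-1)v_p(s)$ automatically; for $p\mid s$ with $p\mid b$, the squarefree hypothesis gives $v_p(b)=1$, reducing the required inequality to $N-m\ge -v_p(d)-v_p(s)$, a quantity $\le -1$ since $v_p(s)\ge 1$, and so implied by $N\ge m$.

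The main obstacle is this last step, where the squarefree hypothesis is indispensable: $v_p(b)=1$ keeps the right-hand side of the inequality slack enough that the weak bound $N-m\ge 0$ suffices to close the argument. When $b$ has a squared prime factor, an additional factor $v_p(b)\ge 2$ enters and can push the required bound above the ceiling gap $N-m$, allowing genuine elements of $\mathcal D^{(\ell)}\setminus \mathcal D^{(1)}$ to appear. This is entirely consistent with Theorem \ref{thm:main2}, which exhibits explicit failure of linearity in the non-squarefree case.
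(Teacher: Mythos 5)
Your overall architecture is sound and is essentially a mirror image of the paper's: the paper peels off the translate $L$ from $L_{k+1}$ and reduces the lemma to the inclusion $L\cap(L-i\alpha)\subseteq L\cap(L-\alpha)$ for $i\ge2$, while you peel off $L-k\alpha$ and reduce to the equivalent inclusion $\mathcal{D}^{(\ell)}\subseteq\mathcal{D}^{(1)}$; your candidate $d_1=d_\ell a^{\ell-1}b^{N}$ is a reasonable analogue of the paper's explicit construction of $\dpp$. The problem is that the integrality verification is genuinely incomplete. Since $d_\ell\ge1$ and $a>1$, the normalizing exponent satisfies $N\le0$, so $d_1=d_\ell r^{\ell-1}/(s^{\ell-1}b^{-N})$, and integrality is the condition $s^{\ell-1}b^{-N}\mid d_\ell r^{\ell-1}$: it constrains every prime dividing $s$ \emph{or} $b$, not only the primes dividing $s$, and your reduction to ``$s^{\ell-1}\mid d_\ell b^{N}$'' silently discards the negative power of $b$. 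The omitted primes $p$ with $p\mid(b,r)$ (so $p\mid b$, $p\mid r$, $p\nmid s$) are exactly where the squarefree hypothesis is needed: there one must show $v_p(d_\ell)+(\ell-1)v_p(r)\ge(-N)v_p(b)$, which does follow from $v_p(b)=1$, $v_p(r)\ge1$, and the bound $-N\le\ell-1$ (a bound that appears nowhere in your argument), but which can fail when $v_p(b)\ge2$.

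That the omitted case is the real content is shown by the paper's own counterexample (Theorem \ref{thm:main2}): there $a$ is an integer, so $s=1$, both cases you verify are vacuous, and yet $\mathcal{D}^{(2)}\subseteq\mathcal{D}^{(1)}$ fails. Conversely, in the case you single out as the one where squarefreeness is indispensable ($p\mid s$ and $p\mid b$), the required inequality in general reads $(N-m)\,v_p(b)\ge-v_p(d)-v_p(s)$, whose right-hand side is $\le-1$ and whose left-hand side is $\ge0$ for \emph{any} value of $v_p(b)$; so squarefreeness is not needed there, and your closing diagnosis attributes the hypothesis to the wrong case. Compare the paper's proof, which invokes squarefreeness exactly once, to conclude $(b/(b,r),(b,r))=1$ and hence $(b/(b,r),r)=1$ --- precisely the control over the primes of $(b,r)$ that your write-up is missing. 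To repair the argument you must add the cases $p\mid b$, $p\nmid s$, split according to whether $p\mid r$; the subcase $p\mid(b,r)$ is where the real work lies.
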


\begin{proof}
By the definition of the sets $L_k$ we have
\begin{align*}
|L_{k+1}|
&=\left|\bigcup_{i=0}^k(L-i\alpha)\right|
=\left|\bigcup_{i=1}^k(L-i\alpha)\right| +|L|
-\left|L\cap\left(\bigcup_{i=1}^k(L-i\alpha)\right)\right|.
\end{align*}
Since
\begin{align*}
\left|\bigcup_{i=1}^k(L-i\alpha)\right|
&=\left|\bigcup_{i=1}^k(L-i\alpha)+\alpha\right|
=\left|\bigcup_{i=0}^{k-1}(L-i\alpha)\right|=|L_k|,
\end{align*}
it follows that 
\begin{equation}
\label{eq:Lk-proof1}
|L_{k+1}| - |L_k| = |L|  
-\left|L\cap\left(\bigcup_{i=1}^k(L-i\alpha)\right)\right|.
\end{equation}
Now note that for $k=1$ the right-hand side of 
\eqref{eq:Lk-proof1} reduces to $|L|-|L\cap (L-\alpha)|$, whereas the
left-hand side becomes $|L_2|-|L_1|$.  Thus, 
to prove the desired relation \eqref{eq:Lk}, it suffices to show that 
\begin{equation*}
\label{eq:Lk-proof2}
\left|L\cap\left(\bigcup_{i=1}^k(L-i\alpha)\right)\right|
=\left|L\cap(L-\alpha)\right|.
\end{equation*}
This in turn will follow if we can show that, for any positive integer
$i\ge 2$,  
\begin{equation}
\label{eq:Lk-proof3}
L\cap (L-i\alpha)\subset L \cap (L-\alpha).
\end{equation}

It remains to prove \eqref{eq:Lk-proof3}. Fix $i\ge 2$, and consider an
element $x\in L\cap (L-i\alpha)$. Since $x\in L$, $x$ must be of the form
\begin{equation}
\label{eq:Lk-proof4}
x=\lgb d
\end{equation}
for some $d\in D$. Since $x\in L- i\alpha$, $x$ must also be of the form
\begin{equation}
\label{eq:Lk-proof5}
x=\lgb \dP  - i\lgb a + m 
\end{equation}
for some $\dP \in D$ and $m\in\ZZ$.
We need to show that $x\in L-\alpha$, i.e., that
\begin{equation}
\label{eq:Lk-proof6}
x=\lgb \dpp  - \lgb a + \mpp  
\end{equation}
for some $\dpp \in D$ and $\mpp \in\ZZ$.

From \eqref{eq:Lk-proof4}
and \eqref{eq:Lk-proof5} we get 
\begin{equation*}
\lgb d = \lgb \dP  -i \lgb a + m
\end{equation*}
and hence 
\begin{equation*}
d=\dP a^{-i} b^m.
\end{equation*}
Using $a=r/s$, this can be written as 
\begin{equation}
\label{eq:Lk-proof7}
dr^i =\dP  b^ms^i. 
\end{equation}
On the other hand, by \eqref{eq:Lk-proof4} 
the desired relation \eqref{eq:Lk-proof6} is equivalent to 
\[
\lgb d=\lgb \dpp  - \lgb a + \mpp , 
\]
i.e., 
\begin{equation}
\label{eq:Lk-proof-goal}
dr=\dpp b^{\mpp }s,
\end{equation}
where $\dpp \in D$ and $\mpp \in \ZZ$.
Thus, we seek to show that if \eqref{eq:Lk-proof7} holds for some
$\dP \in D$ and $m\in\ZZ$, 
then \eqref{eq:Lk-proof-goal} holds for some 
$\dpp \in D$ and $\mpp \in\ZZ$. 

Let
\begin{equation}
\label{eq:Lk-proof8}
b_0=(b,r),\quad \bp =\frac{b}{b_0},\quad \rp =\frac{r}{b_0}.
\end{equation}
By our assumption that $b$ is squarefree we have $(\bp ,b_0)=1$ and 
since also $(\bp ,\rp )=1$, it follows that  
\begin{equation}
\label{eq:Lk-proof9}
(\bp ,r)=(\bp ,\rp b_0) = 1.
\end{equation}
Substituting $b=b_0\bp $ in \eqref{eq:Lk-proof7}, we obtain 
\begin{equation}
\label{eq:Lk-proof10}
\dP  b_0^m \bp\,^m s^i = dr^i.
\end{equation}
Now note that since $r/s>1$, \eqref{eq:Lk-proof7} 
implies $b^m>(d/\dP )\ge 1/(b-1)$, so the integer $m$ 
in \eqref{eq:Lk-proof10}
must satisfy $m\ge 0$.
We consider the cases $m=0$ and $m\ge 1$ separately.

Suppose first that $m=0$. Then \eqref{eq:Lk-proof10} reduces to 
\begin{equation}
\label{eq:Lk-proof10a}
\dP s^i=dr^i.
\end{equation}
Since $(s,r)=1$, it follows that 
$s^i$ divides $d$, i.e., we have
\begin{equation}
\label{eq:Lk-proof11}
d=d_0s^i
\end{equation}
for some positive integer $d_0$. Now let
\begin{equation*}
\label{eq:Lk-proof12}
\dpp =d(r/s).
\end{equation*}
Then $\dpp s=dr$, so \eqref{eq:Lk-proof-goal} holds with $\mpp =0$.
Moreover, by \eqref{eq:Lk-proof11} we have 
\begin{equation*}
\dpp =d(r/s)=d_0 s^i(r/s)=d_0rs^{i-1}, 
\end{equation*}
so $\dpp $ is a positive integer. In addition, $\dpp $ satisfies the upper bound
\begin{equation*}
\dpp = d(r/s)\le d(r/s)^i=\dP \le b-1,
\end{equation*}
where we have used the inequality $r/s>1$
along with the relation \eqref{eq:Lk-proof10a}.
Thus, $\dpp $ is an element in $D$ satisfying \eqref{eq:Lk-proof-goal} with
$\mpp =0$. Thus we have reached the desired conclusion in the case $m=0$. 

Now suppose that $m\ge1$ in \eqref{eq:Lk-proof10}. 
Then $\bp s$ divides the left-hand side of \eqref{eq:Lk-proof10}, and hence
also the right-hand side, i.e., $dr^i$.
Since $(s,r)=1$ and $(\bp ,r)=1$ (see \eqref{eq:Lk-proof9}), it follows
that $\bp s$ divides $d$. Thus, we have 
\begin{equation}
\label{eq:Lk-proof13}
d=\dzp \bp s
\end{equation}
for some positive integer $\dzp $.
Let
\begin{equation}
\label{eq:Lk-proof14}
\dpp =\dzp \rp .
\end{equation}
Then, by \eqref{eq:Lk-proof13},  
\begin{equation}
\label{eq:Lk-proof15}
\dpp bs=\dzp \rp bs=\dzp (r/b_0)(\bp b_0)s=(\dzp \bp s)r=dr,
\end{equation}
so
\eqref{eq:Lk-proof-goal} holds with $\mpp =1$.
Moreover, \eqref{eq:Lk-proof14} shows that 
$\dpp $ is a positive integer, and by \eqref{eq:Lk-proof15} and 
the bound $r/s<b$ (see \eqref{eq:rs-bound}),
$\dpp $ is bounded above by 
\begin{equation*}
\dpp \le \frac{dr}{bs} <d\le b-1.
\end{equation*}
Thus, $\dpp \in D$, and the proof of the lemma is complete.
\end{proof}

\begin{proof}[of Theorem \ref{thm:main}]
We need to show that
\begin{equation}
\label{eq:proof-main1}
p(k)=\cab k + \dab,\quad k=1,2,\dots,
\end{equation}
where 
\begin{align*}
\cab &= b-1 -
\Fl{\frac{b-1}{r}}-\Fl{\frac{(b,r)-1}{s}},
\\
\dab &= b-1-\cab= \Fl{\frac{b-1}{r}}+\Fl{\frac{(b,r)-1}{s}}.
\end{align*}

By Lemma \ref{lem:p(k)-Lk} we have $p(k)=|L_k|$ for any positive integer $k$,
so it suffices to show that \eqref{eq:proof-main1}
holds with $|L_k|$ in place of $p(k)$.

By the first part of Lemma \ref{lem:L1-2} we have 
\begin{equation}
|L_1|=b-1=\cab\cdot 1+\dab, 
\end{equation}
so \eqref{eq:proof-main1} holds for $k=1$. 
By the second part of Lemma
\ref{lem:L1-2} and Lemma \ref{lem:Lk} we have, for any $k\ge 2$, 
\begin{align*}
|L_k|&
=|L_1|+\sum_{i=1}^{k-1}\left(|L_{i+1}|-|L_i|\right)
\\
&=|L_1|+(k-1)\left(|L_2|-|L_1|\right)
\\
&=\cab+\dab+(k-1)\cab = \cab k + \dab,
\end{align*}
which, combined with the relation $|L_k|=p(k)$,
proves the desired formula \eqref{eq:proof-main1}. 
This completes the proof of Theorem \ref{thm:main}.
\end{proof}

\begin{proof}[of Theorem \ref{thm:main2}]
Assume that $b\ge 5$ and $a\ge 2$ are integers such that 
$a^2$ divides $b$ and $b$ is not a rational power of $a$.
We will show that in this case 
the complexity function $p(n)=p_{a,b}(n)$ satisfies 
$p(3)-p(2)\not=p(2)-p(1)$, and hence is not affine.

Since $a^2$ divides $b$ and $b$ is not a rational power of $a$, we have
\begin{equation}
\label{eq:main2aa}
b=a^2b_1
\end{equation}
for some integer $b_1\ge2$. 

By Lemmas \ref{lem:p(k)-Lk} and \ref{lem:L1-2} (which, as noted above, 
do not depend on the assumption that $b$ is squarefree) it suffices to show 
\begin{equation}
\label{eq:main2a}
|L_3|-|L_2|\not = |L_2|-|L_1|.
\end{equation}
Now, as in the proof of Lemma \ref{lem:Lk} (see \eqref{eq:Lk-proof3}) 
we see that  \eqref{eq:main2a} is equivalent to 
\begin{equation}
\label{eq:main2b}
L\cap (L-2\alpha)\not\subset L\cap (L-\alpha).
\end{equation}
Thus, it suffices to construct an element $x\in L$ such that 
$x\in L-2\alpha$, but $x\not\in L-\alpha$. (Recall that  the sets $L$ are
to be understood as subsets in $\TT=\RR/\ZZ$, so all relations are
to be interpreted as relations that hold modulo $1$.).

We take 
\begin{equation}
\label{eq:main2c}
x=\lgb((a^2-1)b_1)=
\lgb\left(\frac{a^2-1}{a^2}b\right).
\end{equation}
Then $x=\lgb d$, where $d=(a^2-1)b_1=b-b_1\in D$, so $x\in L$.
Moreover,  since $\alpha=\lgb a$, we have
\begin{equation*}
x+2\alpha= \lgb\left(\frac{a^2-1}{a^2}b\right) +2\lgb a = \lgb(a^2-1) + 1,
\end{equation*}
so $x=\lgb \dP -2\alpha+1$ with $\dP =a^2-1\in D$. Thus, $x\in L-2\alpha$
and hence 
$x\in L\cap (L-2\alpha)$.

On the other hand, we have
\begin{equation*}
x+\alpha= \lgb\left(\frac{a^2-1}{a^2}b\right) +\lgb a = 
\lgb\left(\frac{a^2-1}{a}\right)   + 1,
\end{equation*}
and since $(a^2-1)/a$ is not an integer, but falls into the interval 
$1<(a^2-1)/a<b$, it follows that $x\not\in L-\alpha$.
Thus, \eqref{eq:main2b} holds, and the proof of Theorem \ref{thm:main2} is
complete.
\end{proof}

%%%%%%%%%%%%%%%%%%%%%%%%%%%%%%%
% section 4 
%%%%%%%%%%%%%%%%%%%%%%%%%%%%%%%

\section{Extreme values of the complexity function $\pab(n)$}
\label{sec:extreme}

\begin{thm}[Extreme values of the complexity of $\Sab$] 
\label{thm:extreme}
Let $(a,b)$ be an admissible pair. Then we have
\begin{align}
\label{eq:cab-bounds}
\Fl{\frac{b-1}{2}} &\le \cab\le b-1,
\\
\label{eq:pab-bounds}
\Fl{\frac{b-1}{2}}n 
+\Ceil{\frac{b-1}{2}} 
&\le \pab(n)\le (b-1)n.
\end{align}
Moreover, the bounds 
in \eqref{eq:cab-bounds} and \eqref{eq:pab-bounds} are sharp: The upper
bounds are attained for $a=b+1$, while the lower bounds
are attained for $a=2$.
\end{thm}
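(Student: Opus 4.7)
The plan is to reduce both inequalities in \eqref{eq:pab-bounds} to the corresponding inequalities on $\cab$ in \eqref{eq:cab-bounds}. Since Theorem~\ref{thm:main} yields $\pab(n) = \cab n + \dab$ with $\cab + \dab = b - 1$ by \eqref{eq:dab}, we may rewrite this as $\pab(n) = \cab(n - 1) + (b-1)$. Because $n - 1 \ge 0$, each bound in \eqref{eq:pab-bounds} will follow from the respective bound on $\cab$, so the entire task reduces to proving $\lfloor (b-1)/2 \rfloor \le \cab \le b-1$.

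The upper bound $\cab \le b - 1$ is immediate from \eqref{eq:cab}, since both floor terms there are non-negative. For its sharpness, take $a = b + 1$: the pair $(b+1, b)$ is admissible (with $(b+1, b) = 1$ and $b + 1$ not a power of $b$), and Corollary~\ref{cor:integer-a}(ii) applies to give $p_{b+1, b}(n) = (b-1)n$, attaining the upper bound.

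The lower bound will be established in the equivalent form $\dab \le \lceil (b-1)/2 \rceil$. The key observation, extracted from the proof of Lemma~\ref{lem:L1-2}, is that every digit $d \in D$ counted by $\dab$ arises either as $d = d_0 s$ (type $m = 0$) or as $d = \dzp s \bp$ with $\bp = b/(b,r)$ (type $m = 1$), and hence must be a multiple of $s$. When $s \ge 2$ this bounds $\dab$ by the number of multiples of $s$ in $\{1,\dots,b-1\}$, giving
\[
\dab \le \Fl{\frac{b-1}{s}} \le \Fl{\frac{b-1}{2}} \le \Ceil{\frac{b-1}{2}}.
\]
When $s = 1$ the number $a = r$ is an integer and this shortcut is vacuous, so one splits on whether $r \mid b$: if $r \mid b$, then $\dab = b/r + r - 2$, and the convex function $r \mapsto b/r + r$ attains its maximum on the divisors of $b$ at the extreme divisors, with the squarefree hypothesis forcing these extremes to be $2$ and $b/2$ (for $b$ even) or at least $3$ and $b/3$ (for $b$ odd composite, whence $b \ge 15$); if $r \nmid b$, write $d = (b, r)$ and $r = dr'$ with $r' \ge 2$, so that $\dab \le (b-1)/(2d) + d - 1$, and the desired bound reduces to the quadratic inequality $2d^2 - (b+2)d + (b-1) \le 0$, which one checks holds on the feasible range $1 \le d \le r/2 < b/2$. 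Sharpness of the lower bound then follows directly from Corollary~\ref{cor:a=2}, which gives $p_{2, b}(n) = \lfloor (b-1)/2 \rfloor n + \lceil (b-1)/2 \rceil$.

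The main obstacle is the $s = 1$ case of the lower bound, where the clean counting argument via multiples of $s$ collapses and one must instead juggle the parity of $b$ together with the size of its smallest prime divisor; the squarefree hypothesis enters crucially here in bounding the ``dangerous'' small divisors of $b$ away from $1$ when $b$ is odd.
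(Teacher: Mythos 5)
Your proposal is correct, and for the lower bound it takes a genuinely different route from the paper. The reduction of \eqref{eq:pab-bounds} to \eqref{eq:cab-bounds}, the upper bound, and both sharpness claims match the paper exactly. For the lower bound, however, the paper first invokes the symmetry property (Corollary \ref{cor:symmetry}) to restrict to $1<r/s<\sqrt{b}$ and then runs a case analysis on the size of $r$ relative to $b$ (the cases $r\ge b$, $b/3\le r<b$, and $3\le r<b/3$, each assuming $b\ge16$), finishing with a direct numerical verification of $f(r,s)=\fl{(b-1)/r}+\fl{((b,r)-1)/s}\le\ceil{(b-1)/2}$ for all $5\le b\le15$. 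You instead split on $s\ge2$ versus $s=1$: for $s\ge2$, your observation that every digit counted by $\dab$ is a multiple of $s$ (the type-$0$ digits $d_0s$ and the type-$1$ digits $\dzp s\bp$ from the proof of Lemma \ref{lem:L1-2}, which the paper shows are disjoint) gives $\dab\le\fl{(b-1)/s}\le\fl{(b-1)/2}$ in one line; for $s=1$ your divisor analysis ($r\mid b$ via convexity of $b/r+r$, and $r\nmid b$ via a quadratic in $d=(b,r)$) closes the remaining cases. This buys a uniform argument with no finite check and no appeal to the symmetry corollary, at the cost of leaning on the internal structure of the proof of Lemma \ref{lem:L1-2} rather than only on the final formula \eqref{eq:dab}. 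Two small points to repair in a write-up: the correct quadratic in the case $r\nmid b$, $s=1$ is $2d^2-(b+1)d+(b-1)\le0$, which factors as $(d-1)\bigl(2d-(b-1)\bigr)\le0$ and holds precisely for $1\le d\le(b-1)/2$ (your coefficient $b+2$ is a slip, though the needed inequality is still true on the feasible range $d\le r/2\le(b-1)/2$); and your closing remark about squarefreeness is slightly misplaced, since any odd $b$ has smallest prime factor $\ge3$ regardless of squarefreeness --- that hypothesis is needed for Theorem \ref{thm:main} itself, and merely ensures the smallest admissible odd composite $b$ is $15$ rather than $9$, while $b\ge9$ already suffices for your estimate $b/3+1\le(b-1)/2$.
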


\begin{proof}
First note that, by \eqref{eq:dab}, we can write 
\[
\pab(n)=\cab n + (b-1)-\cab = (b-1)+ \cab(n-1).
\]
Thus, the maximal and minimal values of $\pab(n)$ are achieved when
$\cab$ is maximal and minimal, respectively, and it therefore suffices 
to prove the bounds \eqref{eq:cab-bounds} for $\cab$.

The upper bound $\cab\le b-1$ in \eqref{eq:cab-bounds}
follows immediately from formula \eqref{eq:cab} of Theorem \ref{thm:main},
and Corollary \ref{cor:integer-a} shows that this bound is attained for
$a=b+1$.

Now consider the lower bound in \eqref{eq:cab-bounds}.
When $a=2$, this bound follows from Corollary \ref{cor:a=2}, which also
shows that the bound is attained in this case.  By the symmetry property
(Corollary \ref{cor:symmetry}), the same conclusion holds for $a=b/2$. 

We now consider the case when $a\not=2$ and $a\not=b/2$.
Without loss of generality we may assume $1<a<b$, so that 
we have $a=r/s$ in the representation \eqref{eq:a=bkrs}. 
Since, by the symmetry property, 
$p_{a,b}(n)=p_{b/a,b}(n)$, we may further restrict the range for $a$ to 
$1<a<\sqrt{b}$. (Note that we do not need to consider the case 
$a=\sqrt{b}$ since then the pair $(a,b)$ is not admissible.)
Thus we have $a=r/s$, were $r$ and $s$ are positive
integers satisfying 
\begin{equation}
\label{eq:rs-cond}
1<\frac{r}{s}<\sqrt{b},\quad (r,s)=1.
\end{equation}
Note that, since $r/s>1$, we necessarily have $r\ge 2$. Moreover, the
case $r=2$ can only occur when $s=1$, but this reduces to the case 
$a=r/s=2$ we considered above. Thus we may assume that $r\ge 3$.
In this case, the bounds \eqref{eq:rs-cond}
imply 
\begin{equation}
\label{eq:rs-cond2}
3\le r< s\sqrt{b}.
\end{equation}

By Theorem \ref{thm:main} we have
\[
\cab =b-1-\Fl{\frac{b-1}{r}} -\Fl{\frac{(b,r)-1}{s}}.
\]
Setting  
\begin{equation}
\label{eq:frs-def}
f(r,s)=
\Fl{\frac{b-1}{r}} +\Fl{\frac{(b,r)-1}{s}},
\end{equation}
the desired bound $\cab\ge \fl{(b-1)/2}$ is seen to be equivalent to 
\begin{equation}
\label{eq:frs-bound}
f(r,s) \le \Ceil{\frac{b-1}{2}}.
\end{equation}
To complete the proof of Theorem \ref{thm:extreme},
it then remains to 
show that the inequality 
\eqref{eq:frs-bound} holds whenever $r,s$ are positive
integers satisfying \eqref{eq:rs-cond} and \eqref{eq:rs-cond2}.

We break the argument into several cases.

\medskip

\textbf{Case I: $r\ge b$.}
By \eqref{eq:rs-cond2}, 
we have in this case $s>r/\sqrt{b}\ge \sqrt{b}$ and hence $s>2$.
Therefore,
\begin{equation}
\label{eq:frs-caseI}
f(r,s)= \Fl{\frac{b-1}{r}} +\Fl{\frac{(b,r)-1}{s}}
\le \Fl{\frac{b-1}{b}}+ 
\Fl{\frac{b-1}{2}}< \Ceil{\frac{b-1}{2}},
\end{equation}
so the inequality \eqref{eq:frs-bound} holds. 

\medskip

\textbf{Case II: $b/3\le r<b$, $b\ge 16$.}
First note that since $r<b$ we have $(b,r)\le b/3$ unless $b$ is even
and $r=b/2$.  In the latter case, we may assume $s\ge 2$ since 
$s=1$ would reduce to the special case $a=r/s=b/2$ 
we considered earlier.  Thus we have
\begin{align*}
\Fl{\frac{(b,r)-1}{s}}
&\le 
\begin{cases} 
\Fl{\dfrac{b}{4}-\dfrac12} &\text{if $b$ is even, $r=b/2$, $s\ge2$,}
\\[2ex]
\Fl{\dfrac{b}{3}-1} &\text{otherwise}
\end{cases}
\\
&\le \frac{b}{3}-\frac{1}{2}
=\frac{b-1}{2}-\frac{b}{6} \le \frac{b-1}{2}-\frac{8}{3},
\end{align*}
where in the last step we used the assumption $b\ge 16$. 
Taking into account the lower bound $r\ge b/3$, we obtain
\begin{align}
\notag
f(r,s) &\le \Fl{\frac{b-1}{b/3}} +\frac{b-1}{2}-\frac{8}{3}
\\
\notag
&=2+ \frac{b-1}{2}-\frac83 < \Ceil{\frac{b-1}{2}}.
\end{align}
Thus \eqref{eq:frs-bound} holds in this case.

\medskip

\textbf{Case III: $3\le r<b/3$, $b\ge 16$.}
We have 
\begin{equation}
\label{eq:fr1a}
f(r,s)\le
\Fl{\frac{b-1}{r}}+(b,r)-1\le
\Fl{\frac{b-1}{r}+r-1} = \Fl{g(r)},
\end{equation}
say, where 
\[
g(r)=\frac{b-1}{r}+r-1.
\]
Since $g'(r)=-(b-1)r^{-2}+1$, the function $g(r)$ is
decreasing in the range $3\le r<\sqrt{b-1}$ and increasing 
in the range $r>\sqrt{b-1}$. Hence, the maximal value of this function 
on the interval $3\le r\le b/3$ 
occurs at one of the endpoints, $r=3$ and $r=b/3$. 
Now, since $b\ge 16$, we have 
\begin{align*}
g(b/3)&=\frac{b-1}{b/3} + \frac{b}{3}-1
\le 3+\frac{b}{3}-1=\frac{b-1}{2}-\frac{b-15}{6}
<\frac{b-1}{2},
\\
g(3)&=\frac{b-1}{3}+2
=\frac{b-1}{2}-\frac{b-1}{6}+2
<\frac{b-1}{2},
\end{align*}
since $b\ge 16$.
Thus we have the bound
\begin{equation}
\label{eq:fr1b}
f(r,s)
\le 
\max(g(b/3),g(3))<\frac{b-1}{2}\le \Ceil{\frac{b-1}{2}},
\end{equation}
which proves \eqref{eq:frs-bound} for Case III.

\medskip

\textbf{Case IV: $3\le r<b$,  $5\le b\le 15$.}
This case can be handled by direct computation of $f(r,s)$ for all pairs
$(r,s)$ of positive integers satisfying \eqref{eq:rs-cond2} and
comparing these values with $\ceil{(b-1)/2}$.
\end{proof}

%%%%%%%%%%%%%%%%%%%%%%%%%%%%%%%
% section 5 
%%%%%%%%%%%%%%%%%%%%%%%%%%%%%%%

\section{Asymptotic and average behavior of the complexity function
$\pab(n)$} \label{sec:asymp}

In this section we consider two natural questions about the complexity
of leading digit sequences $\Sab$:

\begin{itemize}
\item[(1)] Given a sequence $\{a^n\}$, 
how does the complexity of the associated leading digit sequence 
$\Sab$ behave as the base $b$ tends to infinity?
\item[(2)] Given a base $b$, what can we say about the ``average'' complexity 
of the leading digit sequences $\Sab$?
\end{itemize}

%%%%%%%%%%%%%%%%%%%%%%%%%%%%%%%
% asymptotic behavior 
%%%%%%%%%%%%%%%%%%%%%%%%%%%%%%%

We will focus mainly on the case when $a$ is an integer.
Figure \ref{fig:asymptotic} provides numerical data on these
questions.

\begin{figure}[H]
\begin{center}
\includegraphics[width=0.48\textwidth]{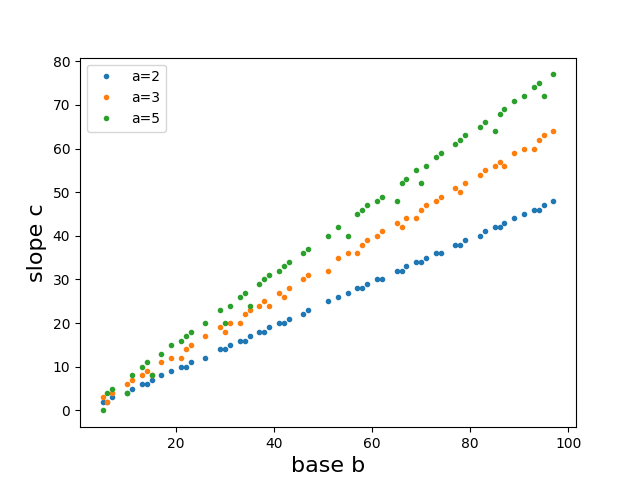}
\hspace{1em}
\includegraphics[width=0.48\textwidth]{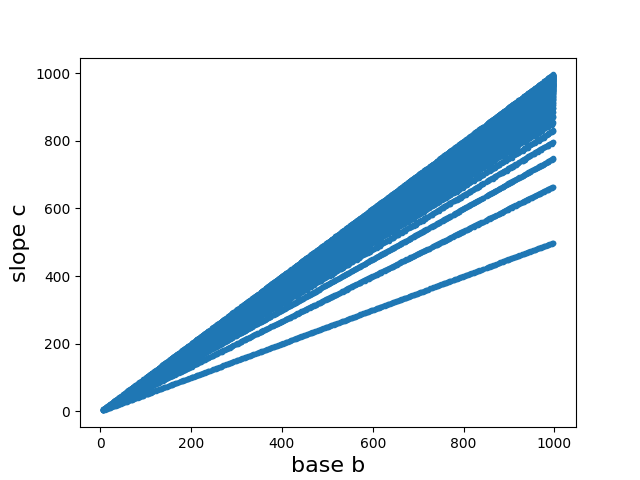}
\end{center}
\caption{The figure on the left shows the slope $c=\cab$
of the complexity function
of the leading digit sequence $\Sab$, as a function of the base
$b$ (restricted to squarefree values),
for each of the values $a=2,3,5$. The figure on the right shows
the set of \emph{all} slopes $\cab$, as $a$ runs through 
integer values $2,3,\dots,b-1$.  The values $\cab$ were computed 
using formula \eqref{eq:cab} of Theorem \ref{thm:main}.
}
\label{fig:asymptotic}
\end{figure}

Figure \ref{fig:asymptotic} suggests that, as $b\to\infty$,
the slope $\cab$ is
asymptotically proportional to $b$, with the proportionality constant
depending on the value $a$.  In the following theorem we show that this
is indeed the case and we determine the proportionality constant involved.

\begin{thm}[Asymptotic behavior of the complexity as $b\to\infty$]
\label{thm:asymptotic}
Let $a$ be a fixed integer $\ge 2$, and suppose 
$b$ tends to infinity through squarefree values.
Then we have
\begin{align}
\label{eq:cab-asymptotic}
\cab&=\left(1-\frac1a\right)b+O(1),   
\end{align}
and, for any fixed integer $n\ge1$, 
\begin{align}
\label{eq:pab-asymptotic}
\pab(n)&= \left(1-\frac1a\right)nb +\frac{b}{a} + O(1).
\end{align}
\end{thm}

\begin{proof}
By part (i) of Corollary \ref{cor:integer-a} we have
\[
\cab=b-\Fl{\frac{b-1}{a}} -(a,b),
\]
provided $b>a$ and $b$ is squarefree. Using the inequalities 
$1\le (a,b)\le a$ and $t-1\le \fl{t}\le t$ it follows that
\begin{align*}
\cab
&\le b-\frac{b-1}{a}+1-(a,b) \le \left(1-\frac1a\right)b +\frac1a,
\\
\cab
&\ge b-\frac{b-1}{a}-(a,b) \ge \left(1-\frac1a\right)b -a,
\end{align*}
which yields the first relation of the theorem, 
\eqref{eq:cab-asymptotic}.

The second relation, \eqref{eq:pab-asymptotic},
follows from the identity
(see \eqref{eq:pab-rational} and \eqref{eq:dab})
\begin{equation}
\pab(n)=\cab n + (b-1-\cab),
\end{equation}
upon substituting the estimate \eqref{eq:cab-asymptotic} for $\cab$.
\end{proof}

We now turn to second question above, concerning 
the average behavior of the complexity function. In the theorem below
we give an asymptotic estimate for 
the average slope of the complexity function $\pab(n)$, 
as $a$ runs through the $b-2$ integers $a=2,3,\dots,b-1$.   
Note that, if $b\ge5$ is squarefree, then for each of these integers 
$(a,b)$ is admissible.

Let
\begin{equation}
\label{eq:cbar}
\cbar =\frac1{b-2}\sum_{a=2}^{b-1}\cab.
\end{equation}
be the average of the slopes $\cab$, taken
over all integers $a$ in the interval $2\le a\le b-1$.

\begin{thm}[Average behavior of the complexity]
\label{thm:average}
With the above notation we have, for any fixed $\epsilon>0$,  
\begin{equation}
\label{eq:cbar-asymptotic}
\cbar = b +O_\epsilon\left(b^{\epsilon}\right),
\end{equation}
where the notation $O_\epsilon(\dots)$ means that the implied constant in the
$O$-term depends on $\epsilon$.
\end{thm}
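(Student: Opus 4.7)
The plan is to start from the explicit formula in Corollary \ref{cor:integer-a}(i), which applies to every integer $a\in\{2,3,\dots,b-1\}$ once $b\ge 5$ is squarefree, since no such $a$ can be an integral power of $b$. That formula gives
\[
\cab=b-\Fl{\frac{b-1}{a}}-(a,b),
\]
so summing over $a$ and dividing by $b-2$ yields
\[
\cbar=b-\frac{1}{b-2}\sum_{a=2}^{b-1}\Fl{\frac{b-1}{a}}-\frac{1}{b-2}\sum_{a=2}^{b-1}(a,b).
\]
The proof then reduces to showing that each of the last two sums is $O_\epsilon(b^{1+\epsilon})$, so that dividing by $b-2$ yields an error of $O_\epsilon(b^\epsilon)$.

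For the floor sum I will use the classical divisor identity
\[
\sum_{a=1}^{N}\Fl{\frac{N}{a}}=\sum_{m=1}^{N}d(m)=N\log N+(2\gamma-1)N+O(\sqrt{N}),
\]
obtained from Dirichlet's hyperbola method. Taking $N=b-1$ and subtracting the $a=1$ contribution, I get
\[
\sum_{a=2}^{b-1}\Fl{\frac{b-1}{a}}=O(b\log b),
\]
which, divided by $b-2$, contributes only $O(\log b)=O_\epsilon(b^\epsilon)$.

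For the gcd sum I will invoke Pillai's function
\[
P(b)=\sum_{a=1}^{b}(a,b)=\sum_{d\mid b}d\,\phi(b/d),
\]
together with the trivial estimate $P(b)\le\sum_{d\mid b}d\cdot(b/d)=b\,d(b)$, where $d(b)$ is the number of divisors of $b$. Since $d(b)=O_\epsilon(b^\epsilon)$ for any fixed $\epsilon>0$, we get $\sum_{a=2}^{b-1}(a,b)\le P(b)=O_\epsilon(b^{1+\epsilon})$. Dividing by $b-2$ contributes $O_\epsilon(b^\epsilon)$. Combining the two bounds with the identity displayed above gives $\cbar=b+O_\epsilon(b^\epsilon)$, as claimed.

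There is essentially no substantial obstacle here: the argument is a straightforward reduction, via Corollary \ref{cor:integer-a}, to two classical number-theoretic sums. The only point that requires a moment's attention is verifying that the formula of Corollary \ref{cor:integer-a}(i) is available uniformly for all $a$ in the averaging range, which holds precisely because $1<a<b$ rules out $a$ being an integral power of $b$. With a little more care one could even extract the sharper expression $\cbar=b-\log b-(2\gamma-1)+O_\epsilon(b^{\epsilon-1/2})$, but the stated $O_\epsilon(b^\epsilon)$ error term suffices.
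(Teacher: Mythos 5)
Your argument is correct and follows essentially the same route as the paper: both start from Corollary \ref{cor:integer-a}(i) and reduce the theorem to the two bounds $\sum_{a=2}^{b-1}\Fl{(b-1)/a}=O(b\log b)$ and $\sum_{a=2}^{b-1}(a,b)\le b\,\tau(b)=O_\epsilon(b^{1+\epsilon})$, the paper using the harmonic-sum estimate where you invoke the (stronger than necessary) Dirichlet divisor asymptotic. One caveat concerning only your closing aside: since $P(b)/b=\sum_{d\mid b}\phi(d)/d$ is unbounded as $b$ ranges over squarefree integers (for primorial-like $b$ it is of size roughly $2^{\omega(b)}$, which exceeds any fixed power of $\log b$), the gcd term contributes an unbounded, non-constant quantity to $\cbar-b$, so the proposed refinement $\cbar=b-\log b-(2\gamma-1)+O_\epsilon(b^{\epsilon-1/2})$ is false --- though this does not affect your proof of the stated theorem.
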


\begin{proof}
By part (i) of Corollary \ref{cor:integer-a} we have
\begin{align}
\label{eq:cbar-estimate}
\sum_{a=2}^{b-1}
\cab
&=
\sum_{a=2}^{b-1}
\left(b -\Fl{\frac{b-1}{a}} - (b,a)\right)
=b(b-2)
- \sum_{a=2}^{b-1}
\Fl{\frac{b-1}{a}}
- \sum_{a=2}^{b-1}
(b,a).
\end{align}
Let $S_1$ and $S_2$ denote the last two sums.  Then
\begin{equation}
\label{eq:S1}
S_1\le \sum_{a=2}^{b-1}\frac{b-1}{a}=(b-1)(\log (b-1) + O(1)) = 
O\left(b\log
b\right).
\end{equation}
Moreover, 
\begin{align}
\label{eq:S2}
S_2&\le \sum_{a=1}^{b}(a,b)
\le \sum_{d|b}d\, \#\{a\le b-1: (a,b)=d\}
\\
\notag
&\le \sum_{d|b}d\, \#\{a\le b: d | a \}
\le \sum_{d|b}d \frac{b}{d}
\\
&= b \tau(b) = 
O_\epsilon\left(b^{1+\epsilon}\right),
\notag
\end{align}
where $\tau(b)$ denotes the number of divisors of $b$, and in the last step
we have used the estimate (see, e.g.,  \cite[Theorem 315]{hardy-wright})
\[
\tau(b)= O_\epsilon\left(b^{\epsilon}\right),
\]
which holds for any fixed $\epsilon>0$.
Combining \eqref{eq:cbar-estimate}, \eqref{eq:S1}, and \eqref{eq:S2}, 
we get 
\begin{equation}
\cbar =  b + O_\epsilon(\log b) + O_\epsilon\left(b^{\epsilon}\right)
=b+O_\epsilon\left(b^{\epsilon}\right),
\end{equation}
as claimed.
\end{proof}

%%%%%%%%%%%%%%%%%%%%%%%%%%%%%%%
% section 6 
%%%%%%%%%%%%%%%%%%%%%%%%%%%%%%%

\section{The set of complexity functions $\pab(n)$}
\label{sec:values}

A fundamental question in the complexity theory of sequences is which
functions can arise as the complexity function $p_S(n)$ of some sequence
$S$. There exists a large body of results in the literature establishing
necessary or sufficient conditions on a complexity function; see Ferenczi
\cite{ferenczi1999} for a survey. 
In particular, a result of Cassaigne \cite[Th\'eor\`eme 5.3]{cassaigne1997} 
implies that any function of the form $cn+d$, where $c$ and $d$ are
\emph{positive} integers, is the complexity function of some sequence $S$ for all $n\ge 1$.  

By Theorem \ref{thm:main}, if $(a,b)$ is admissible, then 
the complexity function $\pab(n)$ of the
leading digit sequence $\Sab$ is necessarily an affine function, i.e.,  of the form
$cn+d$. In light of the result mentioned above, the theorem therefore does
not give rise to new classes of complexity functions. However, one can ask
which functions $cn+d$ can be obtained as complexity functions of a
sequence of the special form $\Sab$. In this section 
we address this question.  We begin with the following definition.

\begin{defn}[Leading Digit Complexity Function and Good Pairs]
\label{def:good-function}
\mbox{}
\begin{itemize}
\item[(i)]
A function $cn+d$ a called a
\emph{leading digit complexity function} if 
there exists an admissible pair $(a,b)$  
such that $cn+d=\pab(n)$ for all $n$.

\item[(ii)]
A pair $(c,d)$ of integers is called \emph{good} if 
it is the pair of coefficients of a leading digit complexity
function $cn+d$.
\end{itemize}
\end{defn}

We define the sets 
\begin{align}
\label{eq:def-g}
G&=\{(c,d): \text{ $cn+d$ is a leading digit complexity function}\},
\\
\label{eq:def-gc}
G(c)&=\{d: (c,d)\in G\}.
\end{align}
Thus, $G$ is the set of all ``good'' pairs $(c,d)$, 
and $|G(c)|$ is the number of good pairs with first coordinate $c$.

Figure \ref{fig:good-pairs-number}
shows the behavior of $|G(c)|/\sqrt{c}$ as a function of $c$
and of $(\sum_{c\le N}|G(c)|) N^{-3/2}$ as a function of $N$. 
The data suggests the first of these two functions is bounded above and
below by positive constants,
but does not converge to a limit, 
while the second function appears to converge to a limit. \
\begin{figure}[H]
\begin{center}
\includegraphics[width=0.49\textwidth]{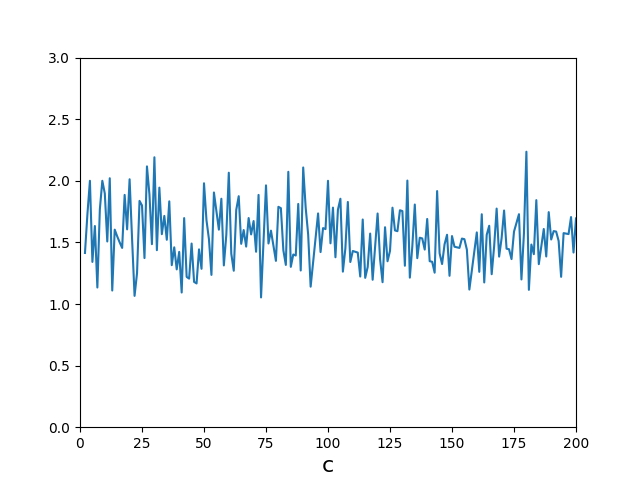}
\hspace{.2em}
\includegraphics[width=0.49\textwidth]{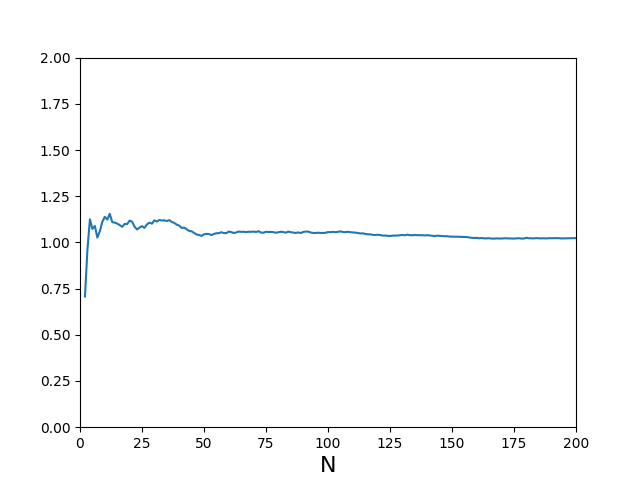}
\end{center}
\caption{Number of good pairs $(c,d)$: 
The figure on the left shows the behavior of the function
$|G(c)|/\sqrt{c}$ as a function of $c$.
The figure on the right shows the behavior of
$(\sum_{c\le N}|G(c)|) N^{-3/2}$ as a function of $N$.} 
\label{fig:good-pairs-number}
\end{figure}

Motivated by such numerical data, we make the following conjecture:

\begin{conj}[Number of Good Pairs]
\label{conj:good-pairs-number}
\mbox{}
\begin{itemize}
\item[(i)] 
There exist 
positive constants $k_1$ and $k_2$  such that
\begin{equation}
\label{eq:good-pairs-bound}
k_1\sqrt{c}\le |G(c)| \le k_2\sqrt{c}
\end{equation}
for all sufficiently large $c$,
but the limit
\begin{equation}
\label{eq:good-pairs-limit}
\lim_{c\to\infty}
\frac{|G(c)|}{\sqrt{c}}
\end{equation}
does not exist.

\item[(ii)] 
There exists a positive constant $k$ such that 
\begin{equation}
\label{eq:good-pairs-asymptotic}
\sum_{c\le N}|G(c)| \sim k N^{3/2}\quad (N\to\infty).
\end{equation}
\end{itemize}
\end{conj}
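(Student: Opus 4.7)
A natural starting point is to rephrase the defining condition: by Theorem~\ref{thm:main}, $(c,d) \in G$ if and only if $b := c + d + 1$ is a squarefree integer $\ge 5$ and there exist coprime positive integers $r, s$ with $1 < r/s < b$ such that
\begin{equation*}
d = \Fl{\frac{b-1}{r}} + \Fl{\frac{(b,r)-1}{s}}.
\end{equation*}
Theorem~\ref{thm:extreme} forces $b \in [c+1, 2c+1]$, so we are reduced to a counting problem over a bounded family of squarefree moduli and pairs $(r,s)$. Throughout I will write $d = D_1(b,r) + D_2(b,r,s)$, where $D_1 = \Fl{(b-1)/r}$ and $D_2 = \Fl{((b,r)-1)/s}$.

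For the lower bound in Part~(i), I would work exclusively within the family $s = 1$, i.e.\ with integer $a = r \ge 2$ coprime to $b$, so that $d = D_1 = \Fl{(b-1)/r}$ and $b = c+d+1$. For each integer $r \in [2, \sqrt{c}]$ this system has a unique candidate $(d(r), b(r))$, and the map $r \mapsto d(r) \sim c/(r-1)$ is strictly decreasing, hence injective. Thus one obtains roughly $\sqrt{c}$ pairwise distinct candidates, and I expect a positive proportion of them to satisfy the required genericity conditions ($b(r)$ squarefree and coprime to $r$), via a standard sieve exploiting both the density $6/\pi^2$ of squarefree integers and the bound $\sum_{p \mid r} 1/p$ on the density of forbidden coprimality failures. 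For the upper bound, I would partition the good $d$-values according to the divisor $(b,r) \mid b$, then bound $|\{D_1(b,r) : 2 \le r < b\}|$ by the classical estimate $O(\sqrt{b})$ for the image of $k \mapsto \Fl{n/k}$; for each prescribed value of $D_1$, the identity $b = c+d+1$ together with the squarefree structure of $b$ should restrict $D_2$ to $O(1)$ admissible choices across all $s \ge 1$.

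For Part~(ii), I would interchange the order of summation:
\begin{equation*}
\sum_{c \le N} |G(c)| \;=\; \#\{(c,d) \in G : c \le N\} \;=\; \sum_{\substack{5 \le b \le 2N+1 \\ b \text{ squarefree}}} M(b; N),
\end{equation*}
where $M(b; N)$ counts the distinct values of $\cab \le N$ arising from admissible $a$. The same two-part decomposition should give $M(b; N) \sim \kappa \sqrt{b}$ for a typical $b$, the leading contribution coming from integer $a$ through the image of $r \mapsto \Fl{(b-1)/r}$. Invoking the elementary asymptotic $\sum_{b \le X \text{ sqfree}} \sqrt{b} \sim (4/\pi^2) X^{3/2}$ (Abel summation against the squarefree-counting function) then yields the stated $N^{3/2}$ growth with an explicit constant $k$.

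The main obstacle throughout lies in controlling sporadic behaviour. To obtain the uniform lower bound in~\eqref{eq:good-pairs-bound} for \emph{all} large $c$, one must rule out pathological $c$ for which most candidates $b(c,r) = c + d(r) + 1$ fail squarefreeness or coprimality, a uniformity that goes beyond mere density arguments. Harder still is the non-existence of the limit in~\eqref{eq:good-pairs-limit}: this would require exhibiting two infinite subsequences of $c$ along which $|G(c)|/\sqrt{c}$ tends to distinct constants, presumably by distinguishing $c$ according to the divisor structure of $c+1$ and of nearby integers, and quantifying how this structure governs both the production of good $d$ and the collisions between different $(r,s)$-representations. Finally, pinning down the constant $k$ in~\eqref{eq:good-pairs-asymptotic} demands an inclusion-exclusion over $(r,s)$ classes sharp enough to cancel the overcounting from pairs $(c,d)$ admitting multiple admissible representations; I expect this to be where the deepest technical work is needed.
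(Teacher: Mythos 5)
The statement you have been asked to prove is labeled a \emph{Conjecture} in the paper: the authors offer no proof, only the numerical evidence of Figure \ref{fig:good-pairs-number}, so there is no argument of theirs to compare yours against. Judged on its own terms, your proposal is a sensible research program but not a proof. The reduction in your opening paragraph is correct and is the natural starting point: by Theorem \ref{thm:main} and the constraint $\cab+\dab=b-1$, a pair $(c,d)$ is good precisely when $b=c+d+1$ is squarefree, $\ge5$, and $d=\Fl{(b-1)/r}+\Fl{((b,r)-1)/s}$ for some coprime $r,s$ with $1<r/s<b$; Theorem \ref{thm:extreme} then confines $b$ to an interval of length about $c$ (your upper endpoint should be $2c+2$ rather than $2c+1$ --- for instance $a=2$, $b=6$ gives $c=2$ and $b=2c+2$ --- but this is harmless). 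The heuristic count of roughly $\sqrt{c}$ candidates from the family $s=1$, $2\le r\le\sqrt{c}$, and the classical $O(\sqrt{b})$ bound on the image of $k\mapsto\Fl{(b-1)/k}$, are both sound and correctly explain why $\sqrt{c}$ is the right normalization in \eqref{eq:good-pairs-bound} and why $N^{3/2}$ is the right scale in \eqref{eq:good-pairs-asymptotic}.

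However, every step that would convert this heuristic into a theorem is deferred. For the lower bound in \eqref{eq:good-pairs-bound} you need, for \emph{every} sufficiently large $c$, a positive proportion of the $\sim\sqrt{c}$ integers $b(r)=c+d(r)+1$ to be squarefree and coprime to $r$; these integers are sparse, non-consecutive, and determined implicitly by the fixed-point condition $d=\Fl{(c+d)/r}$, so the ``standard sieve'' you invoke is not obviously applicable, and no argument is given. The upper bound rests on the unproved assertion that each value of $\Fl{(b-1)/r}$ admits only $O(1)$ compatible values of $\Fl{((b,r)-1)/s}$ summed over all $s$. The non-existence of the limit \eqref{eq:good-pairs-limit} and the identification of the constant $k$ in \eqref{eq:good-pairs-asymptotic} are, as you acknowledge, entirely open: for the former you would need two subsequences of $c$ along which $|G(c)|/\sqrt{c}$ has distinct limit points, and nothing in the proposal produces them. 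In short, the proposal identifies the correct combinatorial skeleton and the correct orders of magnitude, but proves none of the three assertions; since the paper itself leaves the statement as a conjecture this is an honest state of affairs, but it should not be presented as a proof.
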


The numerical data presented in Figure \ref{fig:good-pairs-number}
suggests that we can take $k_1=1$ and $k_2=2.5$ in 
\eqref{eq:good-pairs-bound}.
and $k=1$ in \eqref{eq:good-pairs-asymptotic}

%%%%%%%%%%%%%%%%%%%%%%%%%%%%%%%
% section 7
%%%%%%%%%%%%%%%%%%%%%%%%%%%%%%%

\section{Other complexity measures}
\label{sec:cyclomatic}

In a series of papers in the early 1980s (see \cite{iyengar1983},
\cite{kak1983}, \cite{rajagopal1984}), S. Iyengar, A.K. Rajagopal, S.C. Kak
and others studied the complexity of the sequence of leading digits of
$2^n$ using graph-theoretic complexity measures.  In this section, we
describe this approach, and we determine explicitly 
the complexity of sequences $\Sab$ with respect to a particular graph-theoretic
complexity measure, the so-called \emph{cyclomatic complexity}.

Cyclomatic complexity is a well-known complexity measure for graphs that is
widely  used as a measure for the complexity of computer programs.

\begin{defn}[Cyclomatic Complexity of a Graph (McCabe \cite{mccabe1976},
Berge \cite{berge1973})]
\label{def:cyclomatic-complexity-graphs}
Let $G$ be a finite directed graph. The \emph{cyclomatic complexity} of $G$,
$C_G$, is defined as 
\begin{equation}
\label{eq:def-cyclomatic-complexity-graphs}
C_G= e-n +p,
\end{equation}
where $e$ is the number of (directed) edges, $n$ the number of vertices,
and $p$ the number of connected components of the graph $G$.
\end{defn}

In order to apply this concept to the complexity of a \emph{sequence},
one has to associate a graph to the sequence.  Iyengar et al. \cite{iyengar1983} 
suggest several ways to do so.
The simplest, and most natural, approach is  
to consider the \emph{transition graph}, $G_S$, of the sequence $S$,
defined as the directed graph whose vertices are the symbols in $S$, and
which contains an edge from $a$ to $b$ if and only if $a$ and $b$ occur in
consecutive positions in the sequence $S$.  We thus make the following
definition.

\begin{defn}[Cyclomatic Complexity of a Sequence $S$]
\label{def:cyclomatic-complexity-sequence}
Let $S$ be an infinite sequence over a finite set of symbols, and let $G$
be its transition graph.  
The \emph{cyclomatic complexity}, $C_S$,  of the sequence $S$
is defined as the cyclomatic complexity of the transition graph $G$.
\end{defn}

%%%%%%%%%%%%%%%%%%%%%%%%%%%%%%%
% new version
%%%%%%%%%%%%%%%%%%%%%%%%%%%%%%%

Under a mild additional assumption on $S$ (which amounts to a weak type 
of \emph{recurrence}), we have the following connection between the
cyclomatic complexity, $C_S$, of a sequence
and its block complexity, $p_S(n)$.

\begin{lem}[Cyclomatic Complexity and Block Complexity]
\label{lem:cyclomatic-block-complexity}
Let $S$ be an infinite sequence over a finite set of symbols and assume that 
each symbol occurring in $S$ occurs infinitely often. Then we have
\begin{equation}
\label{eq:cyclomatic-block-complexity}
C_S=p_S(2)-p_S(1) + 1.
\end{equation}
\end{lem}

\begin{proof}
Let $G$ be the transition graph of $S$, and let $n$, $e$, and $p$
denote, respectively, the number of vertices, directed edges, and connected
components of $G$.

The number of vertices in $G$
is the number of symbols in the sequence,
which in turn is equal to the number 
of distinct blocks of length $1$ in the sequence,
i.e., the quantity $p_S(1)$. Thus, we have
\begin{equation}
\label{eq:n=pS1}
n=p_S(1).
\end{equation}

Next, observe that there is a one-to-one correspondence 
between edges in $G$ and
pairs $(d_1,d_2)$ of consecutive terms in the sequence $\Sab$. Indeed, by
the definition of the transition graph of a sequence, there is an edge from
$d_1$ to $d_2$ if and only if $d_1$ and $d_2$ occur as consecutive terms in the
sequence.  Thus, the number of edges in $G$ is equal to the number of
distinct pairs $(d_1,d_2)$ of consecutive terms in the sequence. But the
latter number is the number of distinct blocks of length $2$ in the
sequence, so we have
\begin{equation}
\label{eq:e=pS2}
e=p_S(2).
\end{equation}

Finally, we will show that the graph $G$ has only one connected
component, i.e., that
\begin{equation}
\label{eq:p=1}
p=1.
\end{equation}
Indeed, by our assumption that each term $d$ that occurs in
$S$ occurs there infinitely often, it follows that, given any two such 
terms, $d_1$ and $d_2$, the sequence must
contain a string of consecutive terms beginning with $d_1$ and ending with
$d_2$. By the definition of the transition graph $G$, this means that
there is a path from $d_1$ to $d_2$. Since $d_1$ and $d_2$ were arbitrary
terms (i.e., arbitrary vertices in $G$), it follows that the graph can have only
one connected component, proving \eqref{eq:p=1}.

Substituting \eqref{eq:n=pS1}, \eqref{eq:e=pS2}, and \eqref{eq:p=1} into
\eqref{eq:def-cyclomatic-complexity-graphs},
we obtain the desired relation 
\eqref{eq:cyclomatic-block-complexity}.
\end{proof}

We now focus on the case of leading digit sequences of the form $\Sab$, 
and we denote the cyclomatic complexity of such a sequence by $\Cab$, i.e., 
we set $\Cab=C_S$, where $S=\Sab$.
Combining Lemma \ref{lem:cyclomatic-block-complexity} with Theorem
\ref{thm:main}, we can determine $\Cab$ explicitly for any for any admissible
pair $(a,b)$:

\begin{cor} [Cyclomatic Complexity of $\Sab$]
\label{cor:cyclomatic}
Let $(a,b)$ be an admissible pair, and let
$\Sab$ be the sequence of leading digits of $a^n$ in base $b$.
Then the cyclomatic complexity of $\Sab$ is given by 
\begin{equation}
\label{eq:cyclomatic-complexity}
\Cab= b-\Fl{\frac{b-1}{r}} -\Fl{\frac{(b,r)-1}{s}},
\end{equation}
where $r$ and $s$ are defined as in Theorem \ref{thm:main}, i.e., as the
unique integers satisfying  
\begin{equation}
\label{eq:a=bkrs2}
a=\frac{r}{s}\,b^k,
\quad k\in\ZZ,\quad r,s\in\NN, \quad (r,s)=1,
\quad 1<\frac{r}{s}<b.
\end{equation}
\end{cor}

\begin{proof}
Let $\Sab$ be as in the statement.  It is easy to see (cf. the argument
following \eqref{eq:p(k)-Lk3}) that each digit $d\in\{1,2,\dots,b-1\}$
occurs infinitely often in $\Sab$. Thus $\Sab$ satisfies the hypothesis of 
Lemma \ref{lem:cyclomatic-block-complexity}, and hence has cyclomatic
complexity given by $\Cab = \pab(2)-\pab(1)+1$.  
Substituting the formulas \eqref{eq:pab-rational} and \eqref{eq:cab} 
from Theorem \ref{thm:main}, it follows that 
\begin{equation*}
\Cab=\cab+1 = b-\Fl{\frac{b-1}{r}} -\Fl{\frac{(b,r)-1}{s}},
\end{equation*}
which is the desired formula \eqref{eq:cyclomatic-complexity}.
\end{proof}

%%%%%%%%%%%%%%%%%%%%%%%%%%%%%%%
% section 8
%%%%%%%%%%%%%%%%%%%%%%%%%%%%%%%

\section{Concluding remarks}
\label{sec:concluding}

In this section we discuss some related concepts and open questions
suggested by our results.

\paragraph{Rauzy graphs.}
In Section \ref{sec:cyclomatic} we defined the cyclomatic complexity of a
sequence $S$ as the (graph-theoretic) cyclomatic complexity of the
transition graph $G_S$ associated with this sequence. 
This transition graph is a particular case of a family of graphs
associated with the sequence $S$, known as \emph{Rauzy graphs}, and defined
as follows:
Given a sequence $S$, the Rauzy graph of
level $n$, $\Gamma_n(S)$,  
is the directed graph whose vertices are the distinct 
``blocks'' of length $n$ occurring in $S$, and in which two blocks 
of length $n$ are connected by a directed edge if and only if the second
block ``continues'' the first block in the sense that it overlaps with the
first block in its first $n-1$ positions; see Arnoux and Rauzy
\cite{arnoux-rauzy} and also Section 2.1 of \cite{alessandri-berthe}.

The Rauzy graph $\Gamma_1(S)$ is the transition graph $G_S$ we
have used to define the cyclomatic complexity of a sequence $S$.
We remark that for sequences $\Sab$ the cyclomatic complexity of
the Rauzy graph $\Gamma_n(\Sab)$ is independent of $n$: Indeed, the graph $\Gamma_n(\Sab)$
has $\pab(n)$ vertices, $\pab(n+1)$ edges, and one connected component, so
its cyclomatic complexity is $\pab(n+1)-\pab(n)+1=\cab+1$, where $\cab$ is
the ``slope'' of $\pab(n)$, given by \eqref{eq:cab}. 

\paragraph{Another graph-theoretic complexity measure for sequences.} 
In their paper \cite{iyengar1983}, Iyengar et al. proposed an interesting 
graph-theoretic complexity measure for the leading digit sequence of $\{2^n\}$ 
that is different from the one we considered in the previous section.  It is based
on the remarkable fact, established in \cite{iyengar1983}, that the sequence of
leading digits of $2^n$ can be completely decomposed into the five blocks $a=1248$,
$b=1249$, $c=125$, $d=136$, and $e=137$.  Rewriting the sequence as a
sequence in the symbols $a,b,c,d,e$, one can then consider the associated transition
graph between these symbols. This graph is different from the simple transition
graph, and also from the general Rauzy graphs $\Gamma_n(S)$ considered above. Yet, as 
Iyengar et al. have shown, when $S$ is the leading digit sequence of $\{2^n\}$, all 
of these graphs have the same cyclomatic complexity, namely $5$.

Iyengar et al. focused mainly on the leading digit sequence of $\{2^n\}$. It would
be interesting to see if their approach can be extended to the more general
leading digit sequences $\Sab$ we have considered in the present paper.

\paragraph{Complexity functions of other ``natural'' arithmetic sequences.}
A key motivation for the present work was to completely determine the
complexity function for a natural class of sequences of arithmetic
interest, namely the sequences $\Sab$ of leading digits of $a^n$ in base $b$.
Another class of arithmetic sequences whose complexity has been analyzed in
a similarly systematic manner are sequences obtained as expansions with
respect to an irrational base $\beta>1$; see, e.g.,  Frougny et al. \cite{frougny2004}
and Klouda and Pelantov\'a \cite{klouda2009}.

As a natural extension of our results on the complexity of the sequences
$\Sab$, one can try to determine the complexity of more general leading
digit sequences, such as the leading digits of $\{2^{n^2}\}$, $\{n!\}$, and
$\{n^n\}$.  Recent work \cite{local-benford} 
on the local distribution of sequences 
of this type suggests that these sequences have relatively low
complexity, possibly of polynomial rate of growth. 
On the other hand, in \cite{local-benford} it was also shown that 
for ``almost all'' doubly exponential sequences $\{a^{\theta^n}\}$
the associated leading digit sequences behave locally like independent 
Benford-distributed random variables and thus 
have maximal complexity, i.e., satisfy $p(n)=9^n$ (in the case of base $10$).
Interestingly, recent numerical investigations \cite{mersenne-benford}
suggest  that the same holds for the much slower growing sequence of
Mersenne numbers $\{2^{p_n}-1\}$, where $p_n$ denotes the $n$-th prime
number.  Proving results of this type, however, seems to be well out of reach. 

%%%%%%%%%%%%%%%%%%%%%%%%%%%%%%%
% acknowledgements
%%%%%%%%%%%%%%%%%%%%%%%%%%%%%%%

\acknowledgements

We are grateful to the referees for their thorough reading of the paper
and many helpful comments and suggestions, which, in particular, led to a
strengthening of the statement of Theorem 5.1.

\bibliographystyle{alpha}
\bibliography{complexity-references}

\end{document}